\theoremstyle{plain}
\newtheorem{theorem}{Theorem}[section]
\newtheorem*{theorem*}{Theorem}
\newtheorem{proposition}[theorem]{Proposition}
\newtheorem{corollary}[theorem]{Corollary}
\theoremstyle{definition}
\newtheorem{definition}[theorem]{Definition}
\newtheorem{example}[theorem]{Example}
\theoremstyle{remark}
\numberwithin{equation}{section}
\newcommand{\bC}{\mathbb{C}}
\newcommand{\bN}{\mathbb{N}}
\newcommand{\bD}{\mathbb{D}}
\newcommand{\cP}{\mathcal{P}}
\newcommand{\cB}{\mathcal{B}}
\newcommand{\cA}{\mathcal{A}}
\newcommand{\cD}{\mathcal{D}}
\newcommand{\cT}{\mathcal{T}}
\newcommand{\cF}{\mathcal{F}}
\newcommand{\ra}{\rightarrow}
\DeclareMathOperator{\hol}{Hol}
\begin{document}

\date{\today}
\title[Kernel Summability Methods]{Kernel-Summability Methods and the Silverman-Toeplitz Theorem}

\author{Pierre-Olivier Paris\'e}
\address{Department of Mathematics, University of Hawai'i at Manoa,
Honolulu, Hawai'i,  United-States, 96822.}
\email{parisepo@hawaii.edu}

\thanks{POP is supported by NSERC and FRQNT postdoctoral scholarships.}

\begin{abstract}
We introduce kernel-summability methods in Banach spaces using the vector-valued integrals and prove an analogue of the Silverman-Toeplitz Theorem for regular kernel-summability methods. We also show that if $X$ is a Banach space and one kernel-summability method is included in another kernel-summability method for scalar-valued functions, then the first method is included in the second method, for $X$-valued functions. This extends a previous result from Javad Mashreghi, Thomas Ransford and the author. We then apply these abstract results to the summability of Taylor series of functions in a Banach space of holomorphic functions on the unit disk.
\end{abstract}

\subjclass[2020]{primary 40C10, 40D05; secondary 41A10, 46E20, 40J05} 

\keywords{Summability methods, Silverman-Toeplitz Theorem, Pettis integral, Bochner integral, holomorphic functions.}

\maketitle

\section{Introduction}\label{Sec:Intro}
Given a (complex) Banach space $X$, a summability method is a triplet $(A, c_A (X), \lim_A )$, where $A$ is an application, $c_A (X)$ is called the summability domain, and $\lim_A$ is the new operation of limit induced by the application $A$. For instance, in the case of a matrix-summability method with $X = \bC$, the application $A$ would be determined by an infinite matrix $(a_{m, n})_{m, n \geq 0}$ of complex numbers such that a sequence $(v_n)_{n \geq 0}$ is transformed into the new sequence
	\begin{align*}
	A (v_n)_{n \geq 0} := \Big( \sum_{n \geq 0} a_{m, n} v_n \Big)_{m \geq 0} .
	\end{align*}
	We then say that a sequence $(v_n)$ is $\cA$\textit{-summable} if $A(v_n)_{n \geq 0}$ converges. The book by Boos \cite{Boos2000} is a good presentation of the modern point of view on summability theory. In Section \ref{Sec:GenTermSumTheory}, we introduce in more details the terminology attached to this modern point of view of the theory of summability.
	
An important and classical result in the theory of summability is the famous Silverman-Toeplitz Theorem (see \cite[Theorem 1, Theorem 2]{hardy1949}). This result gives necessary and sufficient conditions for a matrix-summability method to preserve limits of convergent sequences of complex numbers. The matrix-summability methods that preserve limits of convergent sequences are called \textit{regular}. The precise statement of the theorem is presented below.
\begin{theorem*}[Silverman-Toeplitz]
A matrix-summability method $\cA = (A, c_A (X), \lim_A )$, with $A = (a_{m, n})_{m, n \geq 0}$ and $a_{m, n} \in \bC$, is regular if and only if the following conditions are satisfied:
	\begin{enumerate}
	\item $\sup_{m \geq 0} \sum_{n \geq 0} |a_{m, n}| < \infty$;
	\item $\lim_{m \ra \infty} a_{m, n} = 0$, for every integer $n \geq 0$;
	\item $\lim_{m \ra \infty} \sum_{n \geq 0} a_{m, n} = 1$.
	\end{enumerate}
\end{theorem*}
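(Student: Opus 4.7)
The plan is to treat sufficiency and necessity separately, since the two directions call for very different techniques. For sufficiency, I assume the three conditions and take a convergent sequence $v_n \to v$ in $\bC$. The natural decomposition
\begin{align*}
A(v_n)_m - v = \sum_{n \geq 0} a_{m, n}(v_n - v) + v \Big( \sum_{n \geq 0} a_{m, n} - 1 \Big)
\end{align*}
reduces the problem to controlling the first sum, since the second tends to $0$ by (3). Splitting the first sum at an index $N$ so large that $|v_n - v| < \varepsilon$ for $n \geq N$, the tail is bounded by $\varepsilon \sup_m \sum_n |a_{m, n}|$, which is finite by (1); the head is a finite sum that vanishes as $m \ra \infty$ by (2). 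An $\varepsilon/2$ argument then delivers $A(v_n)_m \ra v$.

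For necessity, conditions (2) and (3) are obtained by testing $A$ on well-chosen sequences. Applying $A$ to the $k$-th unit vector $(\delta_{n, k})_{n \geq 0}$, which converges to $0$, forces $a_{m, k} = A((\delta_{n, k}))_m \ra 0$ as $m \ra \infty$, giving (2). Applying $A$ to the constant sequence $1$, which converges to $1$, forces $\sum_n a_{m, n} = A(1)_m \ra 1$, giving (3). These two steps are immediate consequences of regularity applied to explicit sequences.

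The crux is (1). First, I need to check that each row $(a_{m, n})_{n \geq 0}$ actually lies in $\ell^1$: regularity in particular requires that $\sum_n a_{m, n} v_n$ converges for every $v \in c_0$, and for scalar series this forces $\sum_n |a_{m, n}| < \infty$ (one may verify this by choosing signs $\varepsilon_n = \operatorname{sgn}(a_{m, n})$ truncated to a finite range and observing the partial sums would otherwise be unbounded on a $c_0$-sequence). Once each row is in $\ell^1$, it defines a bounded linear functional $\varphi_m$ on $c_0$ of norm exactly $\sum_n |a_{m, n}|$. Regularity implies that for every $v \in c_0$ the sequence $(\varphi_m(v))_{m \geq 0}$ converges, hence is bounded. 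The Banach--Steinhaus uniform boundedness principle then gives $\sup_m \|\varphi_m\| = \sup_m \sum_n |a_{m, n}| < \infty$, which is precisely (1).

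The main obstacle is this last step: recognizing that regularity is a statement about \emph{pointwise} behavior of the row-functionals on $c$, and turning it into the \emph{uniform} estimate (1). Without functional-analytic tools one is forced into a gliding-hump construction, producing a convergent sequence that destroys regularity when $\sup_m \sum_n |a_{m, n}| = \infty$; invoking Banach--Steinhaus sidesteps this bookkeeping and is the cleaner route.
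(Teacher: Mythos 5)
Your proof is correct, and it follows essentially the same route the paper takes for its generalization (Theorem \ref{Thm:SilvermanToeplitzKernel}): the standard decomposition and head/tail splitting for sufficiency, test sequences (unit vectors, the constant sequence) for conditions (2)--(3), and Banach--Steinhaus to upgrade the pointwise convergence of the row functionals to the uniform bound (1). The paper does not reprove this classical scalar statement itself (it cites Hardy) but proves the kernel version by exactly this method, computing $\Vert A_r \Vert$ by truncation where you invoke $c_0^\ast = \ell^1$; your extra step showing each row lies in $\ell^1$ is the one point where the matrix case genuinely needs more care than the kernel case (where the Lebesgue integral forces absolute integrability automatically), and your sketch of it, though terse, is sound.
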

\noindent  The aim of this paper is to explore generalizations and variants of the Silverman-Toeplitz Theorem.

In Section \ref{Sec:SilvTopThm}, we will explore generalizations of the Silverman-Toeplitz Theorem adapted to vector-valued functions. The main result of this section is a precise statement of the Silverman-Toeplitz Theorem for kernel-summability methods. Kernel-summability methods are summability methods given by a (Borel) measurable function $a : F \times E \ra \bC$ from two topological spaces $E$ and $F$ into $\bC$. These methods are applied to functions $v : E \ra X$, where $X$ is a Banach space, in the following way:
	\begin{align}
	\int_{E} a (r, t) v (t) \, d\mu (t) , \label{Eq:DefIntegralKernelSummabilityMethods}
	\end{align}
where $\mu$ is a Borel measure on $E$. Since the function $v$ is vector-valued, in Section \ref{Sec:VecValueInt}, we give an introduction to vector-valued integrals, which is based on \cite{Lewis2022}, to make sense of \eqref{Eq:DefIntegralKernelSummabilityMethods}. Therefore, the idea behind the kernel-summability methods is to transform possibly divergent integrals into convergent integrals. Notice that when $E = F = \bN$, the natural numbers including $0$, then we recover the matrix-summability methods. 

The Silverman-Toeplitz theorem is a result on inclusion of summability methods. Indeed, let $\mathcal{I}$ denote the summability method of strong convergence. This means that the operator in $\mathcal{I}$ is given by the identity $I (v) = v$, where $v : E \ra X$ and $\lim_I$ is the standard convergence of sequences. Then the Silverman-Toeplitz Theorem tells you that the summability method of strong convergence is included in any regular kernel-summability method, meaning that if a function $v : E \ra X$ converges to some vector $x \in X$, then the integral in \eqref{Eq:DefIntegralKernelSummabilityMethods} also converges to $x$. By adopting this point of view, we give a result of the same flavor of the Silverman-Toeplitz Theorem in Section \ref{Sec:InclMethods} by replacing the summability of strong convergence by an arbitrary kernel-summability method. Loosely speaking (see Theorem \ref{Thm:inclusionResult} below for the precise statement), we show that if one kernel-summability method $\cA$ is included in another kernel-summability method $\cB$ only when restricted to complex-valued functions, then $\cA$ is also included in $\cB$ when restricted to vector-valued functions. This extends Theorem 5.1 in \cite{MashreghiPariseRansford2021} to kernel-summability methods. The statement of Theorem \ref{Thm:inclusionResult} involves operators from a Banach space $X$ into another Banach space $Y$. Using operators enable the use of the Banach-Steinhauss Theorem in the proof. Is it possible to remove these operators in the statement of this theorem and still obtain the same conclusion? We partially answer this question in the context of reflexive Banach spaces by introducing the concept of \textit{weakly}-inclusion of summability methods. 

Originally, the inspiration behind the statement of Theorem \ref{Thm:inclusionResult} comes from the applications of summability theory to approximations in Banach spaces of holomorphic functions that were studied in \cite[Section 6]{MashreghiPariseRansford2021}. With this original motivation in mind, we present in Section \ref{Sec:ConsHolBanach} general consequences of the results from Section \ref{Sec:InclMethods} on the summability of Taylor series in Banach spaces of holomorphic functions in the unit disk.

\section{Vector-Valued Integrals}\label{Sec:VecValueInt}
Let $(E , \cT_E )$ be a topological space. Throughout the paper, we assume that $E$ is locally compact and Hausdorff. We will also always assume that $E$ is not compact. Let $X$ be a Banach space. The space of functions from $E$ into $X$ is denoted by $\cF (E, X)$. It is a complex vector space with the usual addition and scalar multiplication of functions. 

We also equip $E$ with a Radon measure $\mu$ defined on the family of Borel sets, meaning that $\mu (K) < \infty$ for any compact set $K \subset E$. We will also suppose that $E$ is $\sigma$-compact. As a consequence, we can find a sequence of compact sets $(K_n)_{n \geq 0}$ such that $K_n \subsetneq K_{n+1}$ and $\cup_n K_n = E$. In particular, this means that $E$ is $\sigma$-finite. We denote by $L^1_{\bC} (E)$ the space of integrable complex-valued functions on $E$ and by $L^\infty_{\bC} (E)$ the space of essentially bounded complex-valued functions on $E$. This section is based on a work by Lewis \cite{Lewis2022}.

\subsection{Measurability}
A function $s \in \cF (E, X)$ is a \textit{step function} if there are pairwise disjoint measurable sets $E_1$, $E_2$, $\ldots$, $E_n$ of finite measure and vectors $x_1$, $x_2$, $\ldots$, $x_n$ in $X$ such that
	\begin{align*}
	s = \sum_{j =1}^n x_j \chi_{E_j},
	\end{align*}
where $\chi_{E_j}$ is the characteristic function of $E_j$. 

\begin{definition}
A function $v \in \cF (E, X)$ is \textit{strongly measurable} if there exists a sequence of step functions $(s_n)_{n \geq 0}$ and a measurable set $Z$ of measure zero such that $\Vert s_n (t) - v(t) \Vert_X \ra 0$ as $n \ra \infty$ for any $t \in E \backslash Z$.
\end{definition}
An important fact we will use later is the following: If $(v_n)_{n \geq 0}$ is a sequence of strongly measurable functions and if $v$ is a function such that $v_n(t) \ra v(t)$ as $n \ra \infty$ $\mu$-a.e. on $E$, then $v$ is strongly measurable. 

\begin{definition}
A function $v \in \cF (E , X)$ is \textit{weakly measurable} if for any continuous linear functional $\phi \in X^\ast$, the function $\phi \circ v : E \ra \bC$ is measurable.  
\end{definition}
There is an important result connecting weak measurability to strong measurability. To state this connection, we need the notion of separably valued functions. A function $v : E \ra X$ is \textit{essentially separably valued} if there exists a measurable set $Z$ of measure zero and there is a countable set $C \subset X$ such that $f (E \backslash Z ) \subset \overline{C}$.
		\begin{theorem}[Pettis Measurability Theorem]
		A function $v : E \ra X$ is strongly measurable if, and only if, the two following conditions are satisfied:
			\begin{enumerate}
			\item $v$ is weakly measurable;
			\item $v$ is essentially separably valued.
			\end{enumerate}
		\end{theorem}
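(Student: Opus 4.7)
The plan is to prove the two implications separately, dispatching the forward direction quickly and then spending the main effort on the converse.

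For the forward direction, suppose $v$ is strongly measurable with $\|s_n(t) - v(t)\|_X \to 0$ off a null set $Z$. Each step function $s_n$ has finite range, so for any $\phi \in X^*$ the composition $\phi \circ s_n$ is a complex-valued simple function, and continuity of $\phi$ yields $\phi(s_n(t)) \to \phi(v(t))$ off $Z$. Hence $\phi \circ v$ is an a.e.\ pointwise limit of measurable functions and is therefore measurable, giving (1). Letting $C$ be the (countable) union of the finite ranges of the $s_n$, we have $v(E \setminus Z) \subset \overline{C}$, giving (2). This direction is essentially immediate from the definitions.

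For the converse, assume (1) and (2). Modifying $v$ on a null set (which does not affect strong measurability), I may assume $v(E) \subset \overline{C}$ where $C = \{x_k\}_{k \geq 1}$ is countable. The key technical point, and the main obstacle, is to show that for every $x \in X$ the scalar function $\varphi_x(t) := \|v(t) - x\|_X$ is measurable. To see this, let $Y := \overline{\spn(C \cup \{x\})}$, a closed separable subspace of $X$ containing $v(t) - x$ for every $t$. Picking a dense sequence $\{y_j\}$ in $Y$ and using Hahn--Banach to choose $\phi_j \in X^*$ with $\|\phi_j\| \leq 1$ and $\phi_j(y_j) = \|y_j\|_X$, a routine density argument yields $\|y\|_X = \sup_j |\phi_j(y)|$ for all $y \in Y$. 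Consequently
\[
\varphi_x(t) \;=\; \sup_{j \geq 1} |\phi_j(v(t)) - \phi_j(x)|,
\]
a countable supremum of measurable functions (by (1)), which is therefore measurable.

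With $\varphi_x$ measurable for every $x \in X$, I build a nearest-neighbor approximation. For each $n$ and each $k \in \{1, \dots, n\}$, let $A_n^k$ be the measurable set of $t$ for which $x_k$ is the closest among $x_1, \dots, x_n$ to $v(t)$, with ties broken by smallest index, and set $w_n(t) := x_k$ on $A_n^k$. By density of $C$ in $\overline{C} \supset v(E)$,
\[
\|w_n(t) - v(t)\|_X \;=\; \min_{1 \leq k \leq n} \|v(t) - x_k\|_X \;\longrightarrow\; 0 \qquad (n \to \infty).
\]
The $w_n$ are only countably-valued, so to match the paper's step-function definition I exploit the $\sigma$-compact exhaustion $(K_n)$ of $E$ fixed in Section \ref{Sec:VecValueInt} and set $s_n := w_n \chi_{K_n}$. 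Each $s_n$ takes at most $n+1$ values and is supported on the finite-measure set $K_n$, so it is a genuine step function, and $s_n(t) \to v(t)$ pointwise because eventually $t \in K_n$. This establishes strong measurability.
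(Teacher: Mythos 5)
Your proof is correct. The paper itself states the Pettis Measurability Theorem without proof (it is imported from the classical literature via Lewis's notes), so there is no in-paper argument to compare against; what you have written is the standard proof: norming the separable subspace $\overline{\spn(C\cup\{x\})}$ by a countable family of Hahn--Banach functionals to get measurability of $t\mapsto\Vert v(t)-x\Vert_X$, then a nearest-point approximation by the dense countable set. The one place where you had to do something beyond the textbook argument is the truncation $s_n := w_n\chi_{K_n}$, and you handled it correctly: the paper's step functions must be supported on sets of finite measure, the countably-many level sets $A_n^k$ need not have finite measure, and intersecting with the $\sigma$-compact exhaustion fixes this while preserving pointwise convergence since every $t$ eventually lies in $K_n$.
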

	In particular, if $v$ is strongly measurable, then it is weakly measurable. We now prove, as a consequence of the Pettis Measurability Theorem, the strong measurability of continuous functions, which is interesting on its own right. We present it as a proposition for reference later.
		\begin{proposition}\label{Prop:ContinuousImpliesStronglyMeas}
		If $v : E \ra X$ is continous, then it is strongly measurable.
		\end{proposition}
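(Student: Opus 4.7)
The plan is to apply the Pettis Measurability Theorem, so I need to verify that a continuous function $v : E \to X$ is (i) weakly measurable and (ii) essentially separably valued.

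For weak measurability, the argument is immediate: for any $\phi \in X^\ast$, the composition $\phi \circ v : E \to \bC$ is continuous as a composition of continuous maps, and every continuous function on a topological space is Borel measurable, hence $\mu$-measurable. So condition (1) of Pettis is automatic.

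The substantive step is condition (2), essential separability of the range. Here I would exploit the standing assumption from Section~\ref{Sec:VecValueInt} that $E$ is $\sigma$-compact: write $E = \bigcup_{n \geq 0} K_n$ with each $K_n$ compact. For each $n$, the image $v(K_n)$ is a compact subset of the Banach space $X$. Compact subsets of a metric space are separable (one can cover them by finitely many balls of radius $1/k$ for every $k$ and take the centers), so each $v(K_n)$ is separable, say with a countable dense subset $C_n \subset v(K_n)$. Then $C := \bigcup_n C_n$ is countable, and $v(E) = \bigcup_n v(K_n) \subset \bigcup_n \overline{C_n} \subset \overline{C}$. Thus $v$ is in fact separably valued (no exceptional null set is needed), which is stronger than essentially separably valued.

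With both hypotheses of the Pettis Measurability Theorem verified, we conclude that $v$ is strongly measurable. I do not anticipate any real obstacle; the only point worth double-checking is the separability of compact sets in a Banach space, which is a standard metric-space fact, and the use of $\sigma$-compactness, which is exactly the assumption placed on $E$ earlier in the section.
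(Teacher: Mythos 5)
Your proof is correct, and it is a mild but genuine streamlining of the paper's argument. Both proofs establish weak measurability from continuity and use $\sigma$-compactness together with the separability of compact subsets of $X$ to feed the Pettis Measurability Theorem. The difference is where the $\sigma$-compactness is exploited: you glue the countable dense sets $C_n \subset v(K_n)$ together first, conclude that $v(E)$ itself is separable, and then apply Pettis a single time to $v$ on all of $E$. The paper instead applies Pettis to each restriction $\left. v \right|_{K_n}$ separately and then recovers strong measurability of $v$ from the stated fact that a pointwise a.e.\ limit of strongly measurable functions is strongly measurable. Your route avoids that auxiliary limit theorem entirely (and also avoids the small bookkeeping issue of interpreting $\left. v \right|_{K_n} \ra v$ as convergence of functions defined on all of $E$), at the cost of nothing; the paper's route keeps the per-compact-set statement available, which is occasionally convenient when only local information is at hand. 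Either way the conclusion is the stronger one that $v$ is genuinely (not just essentially) separably valued, as you note.
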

		\begin{proof}
		Notice first that $v$ must be weakly measurable. This comes from the fact that $\phi \circ v : E \ra \bC$ is a continuous function for every $\phi \in X^\ast$ and therefore measurable.  Furthermore, if $K \subset E$ is a compact set, then $v(K)$ is (essentially) separably valued. To see this, for any given $m$, cover $v(K)$ by a finite union of balls of radius $1/m$ and centered at $x_{n}^m$, with $n = 1, 2, \ldots N_m$. Then, let $C_m := \{ x_1^m , \ldots , x_{N_m}^m \}$ and take $C = \cup_{m \geq 1} C_m$. The restriction $\left. v \right|_{K} : K \ra X$ is strongly measurable by the Pettis Measurability Theorem. Since we assumed that $E = \cup_n K_n$, where $(K_n)_{n \geq 0}$ is an increasing sequence of compact sets, we see that $\left. v \right|_{K_n} \ra v$ pointwise on $E$. Therefore, the function $v$ is strongly measurable.
		\end{proof}

\subsection{Bochner Integral}\label{SecSub:BochnerIntegral}
The integral of a step function $s = \sum_{j= 1}^n x_j \chi_{E_j}$ is defined by
	\begin{align*}
	\int_E s (t) \, d\mu (t) := \sum_{j = 1}^n \mu (E_j) x_j .
	\end{align*}
It does not depend on the representation of $s$. A strongly measurable function $v \in \cF (E, X)$ is \textit{Bochner-integrable} if there is a sequence of step functions $(s_n)_{n\geq 0}$ and a measurable set $Z \subset E$ of measure zero such that
	\begin{enumerate}
	\item  $s_n(t) \ra v(t)$ as $n \ra \infty$, for any $t \in E \backslash Z$.
	\item  $t \mapsto \Vert v (t) - s_n (t) \Vert_X \in L^1_\bC (E)$ for any integer $n \geq 1$ and
		\begin{align*}
		\lim_{n \ra \infty} \int_E \Vert v (t) - s_n (t) \Vert_X \, d\mu = 0.
		\end{align*}
	\item the limit $\lim_{n \ra \infty} \displaystyle\int_A s_n (t) \, d\mu$ exists for any Borel set $A$.
	\end{enumerate}
The \textit{Bochner integral} of a Bochner integrable function $v$ is then defined by
	\begin{align*}
	\int_E v (t) \, d\mu (t) := \lim_{n \ra \infty} \int_E s_n (t) \, d\mu (t).
	\end{align*}
We denote by $L^1 (E, X)$ the space of Bochner integrable functions on $E$. The Bochner Integral satisfies the following properties:
	\begin{enumerate}[(I)]
	\item It is unique and linear;
	\item\label{P:BochnerEquivalentNormIntegrable} A strongly measurable function $v \in \cF (E, X)$ is Bochner integrable if and only if $t \mapsto \Vert v (t) \Vert_X \in L^1_\bC (E)$;
	\item\label{P:UpperBoundForBochnerIntegral} For any $v \in L^1 (E, X)$, we have $\displaystyle \Big\Vert \int_E v \, d\mu \Big\Vert_X \leq \int_E \Vert v \Vert_X \, d\mu$.
	\item\label{P:InterchangeBoundedOperator} If $T : X \ra Y$ is a bounded linear operator, where $Y$ is a Banach space, and $v \in L^1 (E, X)$, then $t \mapsto T(v(t)) \in L^1 (E, Y)$ and
		\begin{align*}
		T \Big( \int_E v (t) \, d\mu (t) \Big) = \int_E T(v(t)) \, d\mu (t) .
		\end{align*}
	\item\label{P:BochnerIntegralConstantFunction} If $f : E \ra \bC$ is in $L^1_{\bC} (E)$ and if $v(t) = f(t) x$ with $x \in X$, then $v$ is Bochner integrable and
		\begin{align*}
		\int_E v(t) \, d\mu (t) = \Big( \int_E f(t) \Big) x .
		\end{align*}
	\end{enumerate}
	We will also make use of the space $L^\infty (E, X)$ of essentially bounded vector-valued functions on $E$.
	
	\subsection{Pettis Integral}
	We introduce a weaker notion of integral, called in the literature the \textit{Pettis integral}. For a weakly measurable function $v: E \ra X$, we say that $v$ is Pettis integrable if 
		\begin{enumerate}
		\item the function $\phi \circ v \in L^1_{\bC} (E)$, for any continuous linear functional $\phi : X \ra \bC$;
		\item  for any Borel set $E' \subset E$, there exists a vector $I_{v, E'} \in X$ such that $\phi ( I_{v, E'} ) = \displaystyle \int_{E'} \phi (v) \, d\mu$, for any continuous linear functional $\phi : X \ra \bC$.
		\end{enumerate}
	In this case, the vector $I_v := I_{v, E}$ is the \textit{weak integral} of $v$ on $E$ and we also write, when the context is clear,
		\begin{align*}
		I_v = \int_E v (t) \, d\mu (t) .
		\end{align*}
	We also say that $v : E \ra X$ is \textit{weakly integrable} when the weak integral $I_v$ exists. The weak integral enjoys the following properties:
		\begin{enumerate}[(I)]
		\item It is unique and linear, when it exists;
		\item If $T : X \ra Y$ is a bounded linear operator, where $Y$ is a Banach space, and if $v$ is a weakly integrable function, then $t \mapsto T(v(t))$ is weakly integrable and
			\begin{align*}
			T \Big( \int_E v(t) \, d\mu (t) \Big) = \int_E T(v(t)) \, d\mu (t) .
			\end{align*}
		\item If $v$ is Bochner integrable, then $v$ is weakly integrable and the Bochner integral is equal to the weak integral. 
		\end{enumerate}
	
	\section{What Is A Summability Method?}\label{Sec:GenTermSumTheory}
The main goal of summability theory is to transform a sequence into another sequence which may have more chance to converge. In this section, we will extend this notion to transform functions into other functions which, hopefully, behave better than the original function. The notions and point of view adopted in this section are adapted from Boos' book \cite{Boos2000}. We let $X$ and $Y$ be Banach spaces with norm $\Vert \cdot \Vert_X$ and $\Vert \cdot \Vert_Y$ respectively. Let $(E, \cT_E)$, $(F, \cT_F)$ be two non-compact, locally compact and Hausdorff topological spaces.

\subsection{Convergence at Infinity}
To mimic the notion of convergence of sequences, we introduce the notion of \textit{convergence at infinity}. A function $v \in \cF (E, X)$ \textit{converges at infinity} if there is an element $x \in X$ such that for any $\varepsilon > 0$, there is a compact set $K \subset E$ such that 
	\begin{align*}
	\Vert v(t) - x \Vert_X < \varepsilon \quad (t \not\in K ). 
	\end{align*}
We denote this by $\lim_{t \ra \infty} v(t) = x$. The fact that $E$ is not compact makes this definition non void.
 
The set of all strongly measurable functions $v : E \ra X$ that are locally bounded and converge at infinity is denoted by $c (E, X)$. A function $v \in \cF (E, X)$ is \textit{locally bounded} if for any compact set $K \subset E$, $\sup_{t \in K} \Vert v(t) \Vert_X < \infty$. We see immediately that $c (E, X) \subset L^\infty (E, X)$. 
	
Here are two examples to illustrate the general theory of summability outlined in the next section.
	\begin{example}
	Let $E = \bN := \{ 0, 1, 2, \ldots \}$ endowed with the topology $\cT = \cP (\bN )$. In this case, since all functions $v : \bN \ra X$ are measurable, the set $c (E, X)$ is the set of all convergent sequences on $\bN$ and the set $L^\infty (E, X)$ is the set of all bounded sequences on $\bN$. The usual notation for these sets are $c (X)$ and $\ell^\infty (X)$.
	\end{example}
	
	\begin{example}\label{Example:IntervalSpace}
	Let $E = [0, R )$, with $0 < R \leq \infty$, endowed with the topology generated by the intervals of the form $(a, b)$, $[0, b)$. In this case, the set $\cF (E, X)$ is the set of functions $v : [0, R) \ra X$. The space $c (E, X)$ is the set of all strongly measurable functions on $E$ which have a limit at $R$. The set $L^\infty (E, X)$ is the set of essentially bounded functions on $E$. 
	\end{example}

\subsection{Definition of a Summability Method}
We define a summability method in the following way.
\begin{definition}
A \textit{summability method} is a triplet $(A, c_A (X) , \lim_A )$, where
	\begin{enumerate}
	\item $A : D_A \ra \cF (F, Y)$ is an application, with $D_A \subseteq \cF (E, X)$ a vector subspace called the \textit{domain} of $A$.
	\item $c_A (X)$ is the \textit{summability domain} defined by
		\begin{align*}
		c_A (X) := \big\{ v \in \cF (E, X) \, : \, v \in D_A \text{ and } \lim_{r \ra \infty} A(v) (r) \text{ exists} \big\} .
		\end{align*}
	\item $\lim_A : c_A (X) \ra X$ is the \textit{summability operator} defined by $\lim_A := \left. \lim \circ A \right|_{c_A (X)}$, where $\lim : c (E, X) \ra X$ is the limit operator at infinity.
	\end{enumerate}
\end{definition}
\noindent A function $v \in \cF (E, X)$ is \textit{$\cA$-summable} (or summable by the method $\cA$) if $v \in c_A (X)$. When the application $A$ is linear on $D_A$, the summability operator is a linear application. We will assume for the rest of the paper that $A$ is linear. Some authors call the application $\lim_A$ an \textit{operator of limits} (see, for example, \cite{Kangro1976, Leon2020}). 

We now give a list of classical summability methods to show how they are defined using the above definition. We will also introduce three types of summability methods that will be our main focus in this paper.

\subsection{Matrix-summability methods}

	\begin{example}
	Let $E = F = \bN$ and $\cT_E = \cT_F := \cP (\bN )$. Then we denote an element $v \in \cF (\bN , X )$ as $(v_n)_{n \geq 0}$. We define the application $I : \cF (\bN , X) \ra \cF (\bN , X)$ by
		\begin{align*}
		I(v) := (v_n)_{n \geq 0} \quad (v := (v_n)_{n \geq 0} \in \cF (\bN , X) )
		\end{align*}
	We have $D_I = \cF (\bN , X)$ and 
		\begin{align*}
		c_I (X) = \big\{ (v_n)_{n \geq 0} \, : \, \lim_{n \ra \infty} v_n \text{ exists} \big\} = c (X)
		\end{align*}
	is the space of all convergent sequences in $X$. The triplet $(I, c_I (X), \lim_I )$ is the \textit{strong convergence} in $X$.
	\end{example}
	
	\begin{example}
	Let $E$ and $F$ be as in the previous example. We define the application $C^0 : \cF (\bN , X) \ra \cF (\bN , X )$ by
		\begin{align*}
		C^0 (v) := \big( v_0 +  v_1 + \cdots + v_n \big)_{n \geq 0} \quad \big( v := (v_n)_{n \geq 0} \in \cF (\bN, X) \big) .
		\end{align*}
	We have $D_{C^0} (X) = \cF (\bN , X)$, the subspace $c_{C_0} (X)$ is the space of convergent series and the summability operator $\lim_{C^0}$ is the value of the convergent series $\sum_{n \geq 0} x_n$. The summability method $(C^0 , c_{C^0} (X), \lim_{C^0})$ is called the \textit{method of summation of series}. We denote the summability domain $c_{C^0} (X)$ by $cs (X)$:
		\begin{align*}
		cs (X) := \Big\{ (v_n)_{n \geq 0} \, : \, \sum_{n \geq 0} v_n \text{ converges} \Big\} .
		\end{align*}
	We usually use the notation $(s_n)_{n \geq 0}$ for the sequence $C^0 (v)$ and each $s_n$ is called the $n$-partial sums of the series $\sum_{n \geq 0} x_n$. 
	\end{example}
	
	\begin{example}
	Let $E$ and $F$ be as in the previous examples. We define the application $C : \cF (\bN , X ) \ra \cF (\bN , X)$ by
		\begin{align*}
		C (v) := \Big( \frac{v_0 + v_1 + \ldots + v_n}{n + 1} \Big)_{n \geq 0} \quad \big( v := (v_n)_{n \geq 0} \in \cF (\bN , X) \big) .
		\end{align*}
	Each member of the sequence $C (v)$ is called the \textit{Ces{\`a}ro mean} (of order $1$) of the sequence $v$ and is denoted by $(\sigma_n )_{n \geq 0}$. We have $D_{C} = X^{\bN}$ and
		\begin{align*}
		c_C (X) = \big\{ (v_n)_{n \geq 0} \, : \, \lim \circ C (v_n)_{n \geq 0} = \lim_{n \ra \infty} \sigma_n \text{ exists} \big\} .
		\end{align*}
	The triplet $(C , c_C (X), \lim_C)$ is called the \textit{Ces{\`a}ro summability method of order $1$} or simply the \textit{Ces{\`a}ro method}. 
	\end{example}
	There is a more general definition of the Ces{\`a}ro method involving the generalized binomial coefficients. We will however not mention its definition here. We would rather only mention a surprising fact about the summability domain of the generalized Ces{\`a}ro methods. It was only recently that a complete characterization of the summability domain, when $X = \bC$, of the Ces{\`a}ro summability methods was obtained \cite{leonetti2020}. This shows that, in general, it is hard to describe explicitly the summability domain of the application $A$ in a summability method. 
	
	The previous examples regroup a special type of summability methods. They can be seen as multiplying an infinite matrix with the infinite sequence $(v_n)$. For example, the application $C$ in the Ces{\`a}ro summability method can be rewritten as
		\begin{align*}
		C(v) = \begin{pmatrix}
		1 & 0 & 0 & \cdots \\
		1/2 & 1/2 & 0 & \cdots \\
		1/3 & 1/3 & 1/3 & \cdots \\
		\vdots & \vdots & \vdots & \ddots
		\end{pmatrix} \begin{pmatrix}
		v_0 \\ v_1 \\ v_2 \\ \vdots
		\end{pmatrix} .
		\end{align*}
	This inspires the following definition.
	\begin{definition}
	Let $E = F = \bN$. Then a summability method $(A, c_A (X) , \lim_A )$ is called a \textit{matrix-summability method} if there exist scalars $a_{m, n} \in \bC$, $m, n \in \bN$, such that the application $A$ is given by
		\begin{align*}
		A (v) = \Big( \sum_{n \geq 0} a_{m, n} v_n \Big)_{n \geq 0} \quad \big( v := (v_n)_ {n \geq 0} \in D_A \big) .
		\end{align*}
	\end{definition}
	\noindent In this case, we let $A = (a_{m, n})_{m, n \geq 0}$ be the infinite matrix with coefficients $a_{m, n}$. Hardy's book \textit{Divergent series} \cite{hardy1949} is the classical reference for matrix-summability methods.
	
	\subsection{Sequence-to-function summability methods}
	Alongside the Ces{\`a}ro summability method, one of the well-known summability methods is the Abel summability method.
	
	\begin{example}
	Let $E = \bN$ with $\cT_E = \cP (\bN )$ and $F = [0, 1)$ with the topology $\cT_F$  generated by the sets of the form $[0, b)$ and $(a, b)$ ($0 < a \leq b < 1$). Notice here that the limit at infinity becomes the limit at $1$. For a sequence $v := (v_n)_{n \geq 0}$, we define the application $A^0_v : [0, r) \ra X$ by
		\begin{align*}
		A^0_v (r) := \sum_{n = 0}^\infty (1 - r)r^n v_n \quad (0 \leq r < 1 )
		\end{align*}
	if the series converges in $X$. The expression of $A^0_v (r)$ is called the \textit{Abel mean} (of order $0$) of the sequence $v$. We then define the application $A : D_{A^0} \subseteq \cF (\bN , X) \ra \cF ([0, 1) , X)$ by 
		\begin{align*}
		A^0 (v) := A^0_v \quad (v \in D_{A^0}) .
		\end{align*}
	In general, we have $D_{A^0} \subsetneq X^{\bN}$ and
		\begin{align*}
		c_{A^0} (X) = \big\{ v := (v_n)_{n \geq 0} \, : \, \lim\circ A^0 (v) = \lim_{r \ra 1^-} \sum_{n = 0}^\infty (1 - r)r^n v_n \text{ exists} \big\} .
		\end{align*}
	The triplet $(A^0 , c_{A^0} (X) , \lim_{A^0})$ is called the \textit{Abel summability method} (of order $0$). 
	\end{example}
	
	Abel summability method is an example of a sequence-to-function summability method.
	\begin{definition}
	Let $E = \bN$ and $(F, \cT_F )$ be a locally compact Hausdorff topological space which is non compact. A summability method $(A, c_A (X), \lim_A )$ is called a \textit{sequence-to-function} summability method if there is a sequence $(a_n)_{n \geq 0}$ of measurable functions $a_n : F \ra \bC$ such that
		\begin{align*}
		A_v (r) = \sum_{n \geq 0} a_n (r) v_n \quad \big( v := (v_n)_{n \geq 0} \in D_A, \, r \in F \big) .
		\end{align*}
	\end{definition}

	\subsection{Kernel summability methods}	
	Sequence-to-function summability methods were recently used in \cite{MashreghiPariseRansford2021} to study the summability of Taylor series of holomorphic functions in the unit disk. One of them can be seen as a special type of kernel summability method.
	
	\begin{example}
	Let $E = F = [0, 1)$ and $\cT_E = \cT_F$ be the topology in the last example. We define the application $L : D_L \subseteq \cF ([0, 1) , X) \ra \cF ([0, 1), X)$ by
		\begin{align*}
		L (v) (r) := \frac{-1}{\log (1 - r)} \int_0^r \frac{v(t)}{1 - t} \, dt ,
		\end{align*}
	where $v \in D_L$. The subspace $D_L$ is defined as
		\begin{align*}
		D_L = \big\{ v \in \cF ([0, 1) , X) \, : \, L(v) (r) \text{ exists for any } r \in [0, 1) \big\} 
		\end{align*}	
	and the summability domain is
		\begin{align*}
		c_L (X) = \big\{ v \in \cF ([0, 1) , X) \, : \, \lim_{r \ra 1^-} L(v) (r) \text{ exists} \big\} .
		\end{align*}
	The triplet $(L , c_L (X) , \lim_L )$ is called the \textit{logarithmic summability method}. This summability method was introduced by Borwein \cite{Borwein1957a}.			 
	\end{example}
	\noindent Inspired by the previous example, we introduce the following type of summability methods.
	\begin{definition}
	Let $(E, \cT_E )$ and $(F , \cT_F )$ be two locally compact Hausdorff topological spaces which are non compact. Let $\mu$ be a Borel measure on $E$. A summability method $(A, c_A (X) , \lim_A )$ is called a \textit{kernel-summability method} if there exists a (Borel) measurable function $a : F \times E \ra \bC$ such that the application $A : D_A \subset \cF (E, X) \ra \cF (F, X)$ is given by
		\begin{align*}
		A(v) (r) := \int_E a (r, t) v(t) \, d\mu (t) , 
		\end{align*}
	where the weak integral exists for any $v \in D_A$ and $r \in F$. The function $a : F \times E \ra \bC$ is called the \textit{summability kernel}.
	\end{definition}
	
	We still need to introduce two more concepts to see where the Silverman-Toeplitz Theorem sits in the general theory of summability.
	
	\subsection{Conservative summability methods}
	As the Ces{\`a}ro summability method has shown, it is hard to describe explicitly the domain of the application in a summability method. We can relax the goal and instead search for subspaces that are contained in the domain $D_A$. We can also try to describe the image $A(M)$ for some subspace $M \subseteq D_A$. To make these last statements more precise, we introduce the following concept.
	\begin{definition}
	Let $\cA := (A, c_A (X), \lim_A )$ be a kernel-summability method. Let $M \subseteq \cF (E, X)$ and $N \subseteq \cF (F, X)$ be two (vector) subspaces. The method $\cA$ is called \textit{$(M, N)$-conservative} if $M \subseteq D_A$ and $A (M) \subseteq N$. We denote by $(M, N)$ the class of $(M , N)$-conservative summability method.
	\end{definition}
	The first chapter of Maddox's memoir \cite{Maddox1980} has an exhaustive summary of the problem of describing the family $(M, N )$ of matrix-summability methods for various choices of spaces $M$ and $N$. The ones that are closely related to the Silverman-Toeplitz Theorem are the classes of bounded summability methods, and of conservative summability methods. In the case $X = \bC$, there are usually defined as followed, :
		\begin{enumerate}
		\item $\cA$ is \textit{bounded} if $A \in ( \ell^\infty (\bC ) , \ell^\infty (\bC ))$.
		\item $\cA$ is \textit{conservative} if $A \in (c(X), c(Y))$.
		\end{enumerate}
	These classes can be defined without the notion of measurability. For kernel-summability methods, the above definitions must be adapted to incorporate measurable functions.
	
	\begin{definition}
	Let $\cA = (A, c_A (X), \lim_A )$ be a kernel-summability method. 
		\begin{enumerate}
		\item $\cA$ is \textit{bounded} if $A \in (L^\infty (E, X) , L^\infty (F, X))$.
		\item $\cA$ is \textit{conservative} if $A \in (c (E, X) , c(F, X))$.
		\end{enumerate}
	\end{definition}
	Since $c (E, X) \subset L^\infty (E, X)$, we have that $(c (E, X) , c (F, Y)) \subset (L^\infty (E, X) , L^\infty (F, X))$. The strong convergence method, the Ces{\`a}ro method, the Abel method and the logarithmic method are all conservative. The method of summation of series is however not conservative, nor bounded. Indeed, taking $x_n = x$, with $\Vert x \Vert_X = 1$, we have $s_n = n x$ which is not bounded. We will mainly use these classes of summability methods to define regularity and scalar-regularity.
	
	\section{The Silverman-Toeplitz Theorem}\label{Sec:SilvTopThm}
	From Ces{\`a}ro Lemma, we have that
		\begin{align*}
		c (X) \subset c_C (X) \quad \text{ and } \quad \lim_{n \ra \infty} \sigma_n = \lim_{n \ra \infty} v_n \quad \big( (v_n)_{n \geq 0} \in c (X) \big) .
		\end{align*}
	Also, from Abel's Theorem, we know that
		\begin{align*}
		c (X) \subset c_{A^0} (X) \quad \text{ and } \quad \lim_{r \ra 1^-} A_r^0 (v_n)_{n \geq 0} = \lim_{n \ra \infty} v_n \quad \big( (v_n)_{n \geq 0} \in c ( X) \big) .
		\end{align*}
	We say that the Ces{\`a}ro and Abel methods are \textit{regular}.
	\begin{definition}
	Let $\cA := (A , c_A (X) , \lim_A )$ be a kernel-summability method. We say that $\cA$ is \textit{regular} if
		\begin{enumerate}
		\item the method $\cA$ is conservative;
		\item the method $\cA$ preserves the value of limits, that is, for any $v \in c (E, X)$, we have
			\begin{align*}
			\lim_A v(t) = \lim_{r \ra \infty} A(v) (r) = \lim_{t \ra \infty} v(t) .
			\end{align*}
		\end{enumerate}
	\end{definition}
	
	In the introduction, we stated the conditions for a matrix-summability method to be regular in the statement of the Silverman-Toeplitz Theorem. We will now present generalizations of this theorem starting with matrix-summability methods applied to vector-valued sequences.
	
	\subsection{Matrix-summability methods}
	The Silverman-Toeplitz theorem can be extended to matrix-summability methods applied to sequences of vectors in a Banach space $X$. The conditions are necessary comes from the fact that $\bC$ embeds isometrically in $X$ via the transformation $\lambda \mapsto \lambda x$, for a unit vector $x$. Therefore, the summability method is regular for \textit{scalar}-valued sequences and the Silverman Toeplitz theorem can be applied to conclude that the conditions should be necessary. On the other hand, the fact that the conditions are sufficient comes from elementary estimates. A more general proof for kernel-summability methods will be given in the next subsection.
	
	The Silverman-Toeplitz Theorem was also extended by Robinson (\cite[Theorem VII]{Robinson1950}) to matrix-summability method where the entries are linear (not necessarily bounded) operators $A_{n, k} : X \ra Y$, where $X$ and $Y$ are Banach spaces. In the statement of the next Theorem, the \textit{group norm} of a row of linear operators $T = (T_k)_{k \geq 0}$ is defined by
		\begin{align*}
		\Vert T \Vert := \sup \Big\{ \Big\Vert \sum_{k = 0}^n T_k x_k \Big\Vert_Y \, : \, n \in \bN \text{ and } \Vert x_k \Vert_X \leq 1 \Big\} .
		\end{align*}	
	Also, the acronym ``SOT'' means the strong operator topology.
		 
	\begin{theorem}[Robinson]
	Let $(A , c_A (X) , \lim_A )$ be a matrix-summability method where the matrix is given by $(A_{m, n})_{m, n \geq 0}$, with $A_{n, k} : X \ra Y$ linear operators. The method $\cA$ is regular if and only if there exists an integer $N \geq 0$ such that
		\begin{enumerate}
		\item $\sup_{m \geq 0} \Vert (A_{m, n})_{k \geq N} \Vert < \infty$;
		\item $\lim_{n \ra \infty} A_{m, n} = 0$ (SOT);
		\item $\sum_{n = 0}^\infty A_{m, n} = A_m$ exists (SOT) for any $n \geq 0$;
		\item $\lim_{m \ra \infty} A_m = I$ (SOT).
		\end{enumerate}
	\end{theorem}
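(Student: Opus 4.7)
The plan is to mirror the classical Silverman–Toeplitz argument, replacing scalar estimates by operator–valued ones, and using Banach–Steinhaus to upgrade pointwise bounds to uniform ones.

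For the \textbf{necessity} direction, I would test $\cA$ on three families of sequences drawn from $c(X)$. First, applying regularity to the constant sequence $v_n = x$ for a fixed $x \in X$ forces $\sum_{n \geq 0} A_{m,n} x$ to exist for each $m$ (which, letting $x$ range over $X$, gives condition (3)) and forces $\lim_{m \to \infty} A_m x = x$ (giving condition (4) in SOT). Second, applying regularity to $v_n = \chi_{\{k\}}(n)\, x$, which lies in $c(X)$ with limit $0$, forces $A_{m,k} x \to 0$ as $m \to \infty$, which is condition (2). Third, for condition (1) I would invoke the Banach–Steinhaus theorem on the family of linear maps $v \mapsto A(v)(m) = \sum_{n \geq 0} A_{m,n} v_n$, viewed as acting on an appropriate Banach space of sequences on which they are defined (for instance a suitably truncated subspace of $c_0(X)$ endowed with the sup norm). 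The threshold $N$ enters precisely here, because individual $A_{m,n}$ are only linear, not necessarily bounded; truncating the first $N$ columns is what makes each row a well-defined bounded linear map on this subspace, after which uniform boundedness gives the group-norm bound.

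For the \textbf{sufficiency} direction, suppose $v = (v_n) \in c(X)$ with $v_n \to x$. Write $v_n = x + w_n$ with $w_n \to 0$, so that
\begin{align*}
A(v)(m) = \sum_{n \geq 0} A_{m,n} x + \sum_{n \geq 0} A_{m,n} w_n = A_m x + \sum_{n \geq 0} A_{m,n} w_n .
\end{align*}
By conditions (3) and (4), $A_m x \to I x = x$ as $m \to \infty$. For the second sum, fix $\varepsilon > 0$ and split at an index $K > N$: the tail $\sum_{n > K} A_{m,n} w_n$ is bounded in norm by $\Vert (A_{m,n})_{n \geq N}\Vert \cdot \sup_{n > K} \Vert w_n \Vert_X$, which by condition (1) and $w_n \to 0$ can be made smaller than $\varepsilon$ uniformly in $m$ by choosing $K$ large. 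Having fixed $K$, the head $\sum_{0 \leq n \leq K} A_{m,n} w_n$ is a finite sum that tends to $0$ as $m \to \infty$ by condition (2) (applied to each of the finitely many fixed vectors $w_n$). Sending $m \to \infty$ then $\varepsilon \to 0$ yields $A(v)(m) \to x$, so $\cA$ is regular.

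The \textbf{main obstacle} is the appearance of the threshold $N$ in condition (1). In the scalar-valued setting, or when the $A_{m,n}$ are known a priori to be bounded, one can take $N = 0$ and apply Banach–Steinhaus directly on $c_0(X)$ to get uniform boundedness. Because here the entries are only assumed to be linear, the group norm of an entire row could be infinite simply because some single early entry is an unbounded operator; the argument must therefore first show that, beyond some finite index, all $A_{m,n}$ are automatically bounded (this will fall out of the convergence of $\sum_n A_{m,n} x$ via a careful Banach–Steinhaus application), and only then extract a uniform group-norm estimate on the truncated rows. This extra layer of care, absent from the scalar case, is what makes Robinson's theorem genuinely operator-theoretic rather than a routine transcription.
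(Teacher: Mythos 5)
The paper does not actually prove this theorem: it states it and refers to Robinson's original gliding-hump argument and to the functional-analytic proof in Maddox's memoir, so there is no in-paper proof to compare your route against. Judged on its own terms, your sufficiency argument is correct and is the standard one (split $v_n = x + w_n$, handle $A_m x \to x$ by (3)--(4), split the remainder at an index $K > N$, bound the tail by the uniform group norm and kill the finite head with the column condition), and your derivations of (2), (3), (4) from regularity via constant sequences and singly-supported sequences are also fine.

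The genuine gap is in the necessity of condition (1), which is the heart of Robinson's theorem. You propose to ``invoke Banach--Steinhaus on the family of maps $v \mapsto A(v)(m)$ acting on an appropriate Banach space of sequences,'' but Banach--Steinhaus needs each member of the family to be a \emph{bounded} operator on a \emph{Banach} space, and neither ingredient is available a priori. The row map $v \mapsto \sum_n A_{m,n} v_n$ is only known to be everywhere defined on $c(X)$ and linear; the natural route to its boundedness (pointwise limit of the partial-sum maps $T_p(v) = \sum_{n \le p} A_{m,n} v_n$ plus uniform boundedness) fails because the $T_p$ themselves are not bounded when individual entries $A_{m,n}$ are unbounded, and a closed-graph argument also stalls for the same reason. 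The actual content of Robinson's proof (and of Maddox's) is a separate gliding-hump lemma: if $\sum_n A_{m,n} x_n$ converges for every null (or bounded) sequence $(x_n)$, then all but finitely many entries of the row are bounded and the tail $(A_{m,n})_{n \ge N_m}$ has finite group norm; a further uniformization step is then needed to produce a single $N$ and a bound uniform in $m$. You correctly identify this as ``the main obstacle,'' but asserting that it ``will fall out of a careful Banach--Steinhaus application'' leaves precisely the nontrivial part of the theorem unproved. To complete the argument you would need to state and prove that lemma (or cite it), and then explain why the $N_m$ can be chosen independently of $m$ before Banach--Steinhaus can legitimately be applied to the truncated row maps.
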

	
	The group norm enjoys many useful properties (see \cite[Proposition 2.3]{Maddox1980}). One of them is the following: If $T_k = a_k I$ where $I : X \ra Y$ is the identity operator and $a_k \in \bC$, then $\Vert (T_k)_{k \geq 0} \Vert = \sum_{k \geq 0} |a_k|$.	The last property tells us that the group norm of a sequence of complex numbers is simply its $\ell^1$-norm. Because of this last property of the group norm, we see how condition (1) in Robinson's Theorem is the right generalization of condition (1) in the Silverman-Toeplitz Theorem. 
	
	Robinson proved this result using a ``gliding hump'' argument adapted from Hardy's book \textit{Divergent Series} \cite{hardy1949}. There is a functional analytic proof of Robinson's result in Maddox's book (see \cite[Theorem 4.2]{Maddox1980}). 
	
	Recently, Leonneti proved a generalized version of Robinson's result. Instead of using the standard $\lim$ operator of sequences to construct the $\lim_A$ in the triplet $(A, c_A (X) , \lim_A )$, he replaced it by the notion of ideal-convergence. The word ``ideal'' refers to a family $\mathcal{I}$ of subsets of $\bN$ with desirable properties. For more details on this recent result, the reader may consult \cite{Leonetti2022}. Leonniti's proof uses a generalization of the Banach-Steinhauss Theorem using the notion of limit superior with respect to an ideal of nonnegative integers.
	
	There is a version of the Silverman-Toeplitz Theorem for sequence-to-function summability methods $(A, c_A (X), \lim_A )$ given by a sequence of functions $(a_n)$. For more details on that, see \cite[Theorem 5]{hardy1949} and \cite[Theorem 2.1]{MashreghiPariseRansford2021} when the functions $(a_n)_{n \geq 0}$ are scalar-valued and see \cite[Theorem X]{Robinson1950} when the functions $(a_n)_{n \geq 0}$ are operator-valued. 
	
	\subsection{Kernel Summability Methods}	
	Because of the different ways the kernel function $a : F \times E \ra \bC$ may behave compared to the more predictable behavior of a kernel $a : \bN \times E \ra \bC$ (matrix or sequence-to-function summability methods), Hardy \cite[p.50]{hardy1949} mentioned that the expected conditions in the Silverman-Toeplitz Theorem are less ``symmetric'' than that in the other versions of the Silverman-Toeplitz Theorem. He therefore confined himself to state sufficient conditions. He does, however, work out potential necessary and sufficient conditions later in his book \cite[pp.61-63]{hardy1949}. We present here an adaptation of these conditions for kernel-summability methods with kernel defined on general $E$ and $F$. Recall that there is a sequence of compact sets $(K_n)_{n \geq 0}$ such that $E = \cup_{n \geq 0} K_n$, $K_{n} \subsetneq K_{n + 1}$, and $\mu (K_n) < \infty$.
	\begin{theorem}\label{Thm:SilvermanToeplitzKernel}
	Let $\cA = (A, c_A (X), \lim_A )$ be a kernel-summability method with kernel $a : F \times E \ra \bC$. The method $\cA$ is regular if and only if
		\begin{enumerate}
		\item the map $t \mapsto a(r, t)$ is in $L^1_\bC (E)$ for any $r \in F$;
		\item the map $r \mapsto \int_E |a (r, t)| \, d\mu (t) \in L^\infty (F, \bC )$;
		\item for any compact set $K \subset E$, $\displaystyle\int_{K} |a(r, t)| \, d\mu (t) \ra 0$ as $r \ra \infty$;
		\item $\displaystyle\lim_{r \ra \infty} \int_{E} a (r, t) \, d\mu (t) = 1$.
		\end{enumerate}
	\end{theorem}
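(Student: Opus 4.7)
My plan is to prove the ``if'' direction by a direct $\varepsilon$-estimate that splits $E = K \cup (E\setminus K)$, using (3) to kill the compact piece and (2) together with (4) to control the tail, and to prove the ``only if'' direction by testing regularity against specific simple functions together with a Banach--Steinhaus argument. The main obstacle is extracting the absolute-value statement (3) from bare regularity, for which I expect a gliding-hump argument to be required.

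For sufficiency, let $v \in c(E,X)$ with $\lim_{t \to \infty} v(t) = x$. Since $v$ is locally bounded and converges at infinity, it is globally bounded by some $M < \infty$; combined with (1) and property (\ref{P:BochnerEquivalentNormIntegrable}), this shows $a(r,\cdot)v(\cdot) \in L^1(E,X)$ for every $r \in F$, so $v \in D_A$. I would then write
\begin{align*}
A(v)(r) - x \;=\; \int_E a(r,t)\bigl(v(t) - x\bigr)\,d\mu(t) + \Bigl( \int_E a(r,t)\,d\mu(t) - 1 \Bigr) x,
\end{align*}
note that the second summand tends to $0$ by (4), and for the first, given $\varepsilon > 0$, pick a compact $K \subset E$ with $\|v(t) - x\|_X < \varepsilon$ for $t \notin K$ and bound its $X$-norm by
\begin{align*}
(M + \|x\|_X)\int_K |a(r,t)|\,d\mu(t) + \varepsilon \sup_{r \in F} \int_E |a(r,t)|\,d\mu(t).
\end{align*}
The first summand tends to $0$ by (3), the second is finite by (2), so letting $\varepsilon \to 0$ gives $A(v)(r) \to x$. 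To finish, $A(v) \in c(F,X)$ because strong measurability in $r$ follows by approximating $v$ by step functions and applying Fubini to each resulting scalar integral, while local boundedness of $A(v)$ on $F$ follows from the pointwise bound $\|A(v)(r)\|_X \leq M \|a(r,\cdot)\|_{L^1_\bC}$ combined with (2).

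For necessity, conditions (1) and (4) are obtained by applying $\cA$ to the constant function $v \equiv x_0$ for a unit vector $x_0 \in X$: the existence of the weak integral $A(v)(r) = \bigl(\int_E a(r,\cdot)\,d\mu\bigr) x_0$, tested against any $\phi \in X^\ast$ with $\phi(x_0) = 1$, forces $a(r,\cdot) \in L^1_\bC(E)$, and the convergence $A(v)(r) \to x_0$ then forces $\int_E a(r,t)\,d\mu(t) \to 1$. For condition (2), I would define the bounded linear functionals $T_r : C_0(E) \to \bC$ by $T_r f := \int_E a(r,t) f(t)\,d\mu(t)$; since $C_0(E) \subseteq c(E,\bC)$ with limit $0$ at infinity, regularity gives $T_r f = A(f)(r) \to 0$, hence $\sup_r |T_r f| < \infty$ for each $f$. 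Because the operator norm of $T_r$ on $C_0(E)$ equals $\|a(r,\cdot)\|_{L^1_\bC}$ (by Riesz representation), the Banach--Steinhaus Theorem delivers $\sup_r \|a(r,\cdot)\|_{L^1_\bC} < \infty$, and Fubini yields measurability of $r \mapsto \|a(r,\cdot)\|_{L^1_\bC}$, completing (2).

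Condition (3) is where I expect the real work to lie. Applying regularity to $v = \chi_K x_0$ immediately yields $\int_K a(r,t)\,d\mu(t) \to 0$ \emph{without} absolute values, but upgrading this to the $|a|$-version is non-trivial because weak-$\ast$ convergence of the complex measures $\chi_K a(r,\cdot)\,d\mu$ in $C(K)^\ast$ need not force total-variation convergence. My plan is to argue by contradiction: suppose a sequence $r_n \to \infty$ and $\varepsilon_0 > 0$ satisfy $\int_K |a(r_n,\cdot)|\,d\mu \geq \varepsilon_0$. Using the sign functions $g_n(t) := \overline{a(r_n,t)}/|a(r_n,t)|$ (extended by $0$) to realize these $L^1$-norms, and the uniform $L^1$ bound from (2) to keep cross-interactions small, I would assemble by a gliding-hump construction (after passing to a further subsequence) a strongly measurable $v \in c(E,X)$ whose transform $A(v)(r_n)$ fails to converge, contradicting regularity. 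This gliding-hump step is the main obstacle, and the uniform bound from (2) is what I expect to make it go through.
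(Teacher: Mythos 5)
Your sufficiency argument and your treatment of conditions (1), (2) and (4) in the necessity direction are essentially the paper's: the same decomposition $A(v)(r)-x=\int_E a(r,t)(v(t)-x)\,d\mu+\bigl(\int_E a(r,t)\,d\mu-1\bigr)x$ with the compact/tail split, the same constant test function plus a Hahn--Banach functional for (1) and (4), and Banach--Steinhaus for (2). For (2) you work with the functionals $T_r$ on $C_0(E)$ and use Riesz representation to identify $\Vert T_r\Vert$ with $\Vert a(r,\cdot)\Vert_{L^1_\bC}$, whereas the paper works with the operators $A_r$ on the Banach space $c(E,X)$ and computes $\Vert A_r\Vert_{c(E,X)\ra X}=\int_E|a(r,t)|\,d\mu$ by truncating to the exhausting compacts $K_n$ and testing with sign functions. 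The two routes are interchangeable; yours is arguably cleaner, since regularity of the complex measure $a(r,\cdot)\,d\mu$ is immediate from $\mu$ being Radon.

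The genuine gap is condition (3), exactly where you predicted the real work to lie. The gliding-hump you sketch cannot be completed, because regularity does not in fact force $\int_K|a(r,t)|\,d\mu\ra 0$ once $E$ is non-discrete. Take $E=F=[0,1)$ with Lebesgue measure and $a(r,t)=b(r,t)+\sin\bigl(t/(1-r)\bigr)$, where $b(r,t)=(1-r)^{-1}\chi_{[r,1)}(t)$. For every $v\in c(E,X)$ the oscillatory part contributes $\int_0^1\sin(t/(1-r))v(t)\,dt\ra 0$ (Riemann--Lebesgue, valid for Bochner integrals by approximating $v$ by step functions in $L^1(E,X)$), so the method is regular; yet $\int_0^{1/2}|a(r,t)|\,dt\ra 1/\pi\neq 0$. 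Consequently no single $v\in c(E,X)$ assembled from the sign functions $g_n$ can witness a failure of (3): the $g_n$ may be pairwise asymptotically orthogonal on the very same compact set, which is precisely the weak-$\ast$ versus total-variation discrepancy you flagged. The correct conclusion is that the implication is false in general, not merely hard. For what it is worth, the paper's own argument for (3) applies regularity to $v_K=\chi_{K}\,a(r,\cdot)/\overline{a(r,\cdot)}$, a test function depending on the limiting variable $r$, which is not a legitimate use of regularity either; so your hesitation identifies a real defect rather than a missing trick. In the discrete case $E=\bN$ the issue evaporates, since a compact set is finite and $\int_{\{n\}}|a(m,n)|\,d\mu=|a_{m,n}|$, which is why the classical Silverman--Toeplitz necessity proof goes through there.
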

	 
	\begin{proof}
	Suppose that the conditions (1)--(4) are satisfied. We have to show that $\cA$ is regular. Therefore, we have to show the following:
		\begin{enumerate}[a)]
		\item $c (E, X) \subset D_A$;
		\item $A (c (E, X)) \subset c (F, X)$ and;
		\item $\lim_{r \ra \infty} A_v (r) = \lim_{t \ra \infty} v(t)$ for any $v \in c (E ,X)$.
		\end{enumerate}
		
	We start by showing (a). Let $v \in c (E, X)$. In particular, the function $v$ is strongly measurable and bounded. Therefore, the function $t \mapsto a(r, t) v(t)$ is strongly measurable and the Bochner integral
		\begin{align*}
		A_r (v) := A (v) (r) = \int_E a(r, t) v(t) \, d\mu (t)
		\end{align*}
	exists by the condition (1) above and by an application of Property \eqref{P:BochnerEquivalentNormIntegrable} in \S\ref{SecSub:BochnerIntegral}. In particular, $v \in D_A$.
	
	We now show (b), that is $A (c (E, X)) \subset c (F, X)$. By condition (2), the map $r \mapsto A_r (v)$ is strongly measurable. To show that $\lim_{r \ra \infty} A_r (v)$ exists, we will show directly that (c) holds. With the notation introduced above, we have to show that 
		$$
		\lim_{r \ra \infty} A_r (v) = \lim_{t \ra \infty} v(t),
		$$ 
	for any $v \in c_{\infty} (E, X)$. Let $v \in c_{\infty} (E, X)$ and let $x := \lim_{t \ra \infty} v(t)$. We have
		\begin{align*}
		\Vert A_r (v) - x \Vert_X &= \Big\Vert \int_E a(r, t) v(t) \, d\mu (t) - x \Big\Vert_X \\
		&= \Big\Vert \int_E a(r, t) (v(t) - x ) \, d\mu (t) + \Big( \int_E a(r, t) \, d\mu (t) - 1 \Big) x \Big\Vert_X .
		\end{align*}
	Let $\varepsilon > 0$. There is a compact set $K_0 \subset E$ such that $\Vert v(t) - x \Vert_X < \varepsilon$ for $t \not\in K_0$. Then, we have
		\begin{align*}
		\Big\Vert \int_E a(r, t) (v(t) - x ) \, d\mu (t) \Big\Vert_X & \leq \Big\Vert \int_{K_0} a(r, t) (v(t) - x ) \, d\mu (t) \Big\Vert_X \\
		& \qquad \qquad \qquad + \Big\Vert \int_{E\backslash K_0} a(r, t) (v (t) - x) \, d\mu (t) \Big\Vert_X \\
		& \leq \sup_{t \in K_0} \Vert v(t) - x \Vert_X \int_{K_0} |a(r, t)| \, d\mu (t) + M \varepsilon .
		\end{align*}		 
	Therefore, taking a compact set $K \subset F$ sufficiently big, we see that for any $r \not\in K$, we have
		\begin{align*}
		\Vert A_r (v) - x \Vert_X < \sup_{t \in K_0} \Vert v(t) - x \Vert_X \varepsilon + M \varepsilon + \varepsilon \Vert x \Vert_X .
		\end{align*}
	Therefore, $\lim_A v(t) = \lim v(t)$.
	
	We will now show that the four conditions are necessary. We first start by showing that condition (1) is necessary. Given a unit vector $x \in X$, the function $v(t) = x$, for every $t \in E$, belongs to $c (E, X)$. Therefore, since $c (E , X) \subset D_A$, the (weak) integral
		\begin{align*}
		\int_E a (r, t) x \, d\mu (t)
		\end{align*}
	exist, for every $r \in F$. However, for a fixed $r \in F$, using the Hahn-Banach Theorem, we can find a continuous linear functional $\phi \in X^\ast$ such that $\phi (x) = \Vert x \Vert_X = 1$. Therefore, from the definition of the weak integral, we see that $\phi (a(r, t) x) = a(r, t)$ must be in $L^1_{\bC} (E)$. Furthermore, we have $\lim_{t \ra \infty} v(t) = x$ and therefore, from the regularity of $\cA$,
		\begin{align*}
		\lim_{r \ra \infty} \int_E a(r, t) v(t) \, d\mu (t) = x.
		\end{align*}
	Using the fact that $\phi (x) = \Vert x \Vert_X = 1$, we conclude that
		\begin{align*}
		\lim_{r \ra \infty} \int_E a(r, t) \, d\mu (t) = 1 .
		\end{align*}
	Thus, condition (4) is also necessary. 
	
	Notice that since we now know that $t \mapsto a(r, t) \in L^1_{\bC} (E)$ for every $r \in F$, then the Bochner integral of $t \mapsto a(r, t) v(t)$ exists, for every $r \in F$ and every $v \in c (E, X)$.
	
	To show that condition (2) is necessary, we will introduce some notations. Let $\Vert \cdot \Vert_{\infty}$ be the essential supremum norm on $L^\infty (E, X)$. Therefore, after identifying the functions that agree a.e. on $E$, the space $L^\infty (E, X)$ becomes a Banach space and $c (E, X)$ becomes a Banach space under the same norm. We define the linear operator $A_r : c (E, X) \ra X$ as followed:
		\begin{align*}
		A_r (v) := \int_E a(r, t) v (t) \, d\mu (t) .
		\end{align*}
	For each $r \in F$, the  linear operator $A_r : c (E, X) \ra X$ is bounded. Indeed, given $v \in c (E, X)$ and using Property \eqref{P:UpperBoundForBochnerIntegral} in \S\ref{SecSub:BochnerIntegral},
		\begin{align*}
		\Vert A_r (v) \Vert_X \leq \int_E |a(r, t)| \Vert v(t) \Vert_X \, d\mu (t) \leq \Big( \int_E |a (r, t)| \, d\mu (t) \Big) \Vert v \Vert_{\infty} .
		\end{align*}
	For each $v \in c (E, X)$, the function $r \mapsto A_r (v)$ belongs to $c (F, X)$ because the method $\cA$ is regular. Therefore, for each $v \in c (E, X)$, we have $\sup_{r \in F} \Vert A_r (v) \Vert_X < \infty$. By the Banach-Steinhauss Theorem, we conclude that
		\begin{align*}
		\sup_{r \in F} \Vert A_r \Vert_{c (E, X) \ra X} < \infty .
		\end{align*}
	It remains to compute the norm of $A_r$. Define the bounded linear operator $A_r^n : c (E, X) \ra X$ as followed:
		\begin{align*}
		A_r^n (v) := \int_{K_n} a(r, t) v(t) \, d\mu (t) .
		\end{align*}
	Let $x \in X$ be a unit vector and let $E' := \{ t \in E \, : \, a(r, t) \neq 0 \}$. Define the function $v_n : E \ra X$ by 
	\begin{align}
	v_n := \chi_{E' \cap K_n} a(r, \cdot ) / \overline{a (r, \cdot )} . \label{Eq:FunctionTestCondition3Silverman}
	\end{align}
	Then we have $v_n \in c_\infty (E, X)$, $\Vert v_n \Vert_\infty = 1$, and moreover $v_n$ is Bochner integrable. Applying $A_r^n$ and using Property \eqref{P:BochnerIntegralConstantFunction} in \S\ref{SecSub:BochnerIntegral}, we obtain
		\begin{align*}
		 \Vert A_r^n (v_n) \Vert_X = \int_{K_n} |a(r, t)| \, d\mu (t) \Vert x \Vert_X = \int_{K_n} |a (r, t)| \, d\mu (t) .
		\end{align*}
	Therefore, the norm of $A_r^n$ is the right-hand side of the last equality. We then obtain
		\begin{align*}
		\Vert A_r^{n} (v) - A_r (v) \Vert_X \leq \int_{E \backslash K_n} |a(r, t)| \, d\mu (t) \quad (\Vert v \Vert_{\infty} \leq 1 )
		\end{align*}
	and using the Lebesgue Dominated Convergence Theorem in $L^1_{\bC} (E)$
		\begin{align*}
		\lim_{n \ra \infty} \Vert A_r^{n} - A_r \Vert_{c (E, X) \ra X} = 0 .
		\end{align*}
	Thus, we conclude that
		\begin{align*}
		\Vert A_r \Vert_{c (E, X) \ra X} = \lim_{n \ra \infty} \int_{K_n} |a (r, t)| \, d\mu (t) = \int_E |a (r, t) | \, d\mu (t) .
		\end{align*}
	
	Finally, to prove condition (3), define the function $v_K : E \ra X$ as in \eqref{Eq:FunctionTestCondition3Silverman}, where $K \subset E$ is compact and $x \in X$ with $\Vert x \Vert_X = 1$. Since $v_K \in c (E, X)$ with $\lim_{t \ra \infty} v_K (t) = 0$ and $\cA$ is regular, we conclude that
		\begin{align*}
		\lim_{r \ra \infty} \int_K |a(r, t)| x \, d\mu (t) = 0.
		\end{align*}
	Applying Property \ref{P:BochnerIntegralConstantFunction} from \S\ref{SecSub:BochnerIntegral}, we conclude that condition (3) must be satisfied and this ends the proof.	
	\end{proof}

	\section{Inclusion Between Summability Methods}\label{Sec:InclMethods}
	Regularity for a summability method $\cA = (A, c_A (X), \lim_A )$ can also be expressed in the following way: The domain of the summability method of strong convergence is included in the domain of the application $A$ and the restrictions of the operators $\lim$ and $\lim_A$ to the space $c_\infty (E, X)$ are equal. We generalize this idea of inclusion to two arbitrary summability methods.  
	
	\begin{definition}
	Let $\cA := (A, c_A (X), \lim_A )$ and $\cB := (B, c_B (X) , \lim_B )$ be two summability methods and let $M \subset \cF (E, X)$.
		\begin{enumerate} 
		\item The method $\cA$ is \textit{$M$-included} in the method $\cB$ if the following conditions are satisfied:
			\begin{enumerate}
			\item $M \cap D_A \neq \emptyset$;
			\item $M \cap D_A \subset D_B$;
			\item $M \cap c_A (X) \subseteq c_B (X)$ and $\lim_A v(t) = \lim_B v(t)$ for any $v \in c_A (X)$
			\end{enumerate}
		We denote this by $A \subseteq_M B$. 
		\item The methods $\cA$ and $\cB$ are \textit{$M$-equivalent} if $A \subseteq_M B$ and $B \subseteq_M A$. In this case, we write $A \sim_M B$.
		\end{enumerate}
	\end{definition}
	When $c_A (X) \subset M$, then $\cA$ is $M$-included in $\cB$ becomes simply $\cA$ is \textit{included} in the method $\cB$ and we denote this by $A \subseteq B$. Also, when $c_A (X) \cup c_B (X) \subset M$, then $\cA$ and $\cB$ are $M$-equivalent becomes $\cA$ and $\cB$ are \textit{equivalent} and we write $A \sim B$. 
	
	For $X = \bC$, the Ces{\`a}ro summability method $(C, c_C (\bC ), \lim_C)$ is included in the Abel summability method $(A^0, c_{A^0} (\bC ), \lim_{A^0})$. A reference for this result is \cite[Theorem 3.6.11(a)]{Boos2000}. The method of proof can be easily adapted to sequences in a Banach space. Therefore, we can claim that the Ces{\`a}ro summability method $(C, c_C (X), \lim_C)$ is included in the Abel summability method $(A^0 , c_{A^0} (X), \lim_{A^0})$. 
	
	For $X = \bC$, the Ces{\`a}ro summability method is equivalent to the Riesz summability method. We won't elaborate here on the Riesz summability method, but for more details, we refer to \cite{Kuttner1962} and \cite{Riesz1924}.
	
	To deduce the Silverman-Toeplitz Theorem for the matrix-summability methods, we used the fact that the original Silverman-Toeplitz Theorem was true for matrix-summability methods applied to complex-valued sequences. Therefore, it seems that when a summability method is included into another for complex-valued sequences, then the first method is included in the second one for vector-valued sequences. This depends, however, strongly on the proof of the inclusion of one summability method into another one for scalar-valued sequences. In \cite{MashreghiPariseRansford2021}, the authors found a way to generalize a result about inclusion of summability methods valid for complex-valued sequences to vector-valued sequences. Before recalling this result, we introduce some vocabulary.
	
	We denote by $M_x := \{ \lambda x \, : \, \lambda \in \bC \}$, where $x \in X$ is a unit vector. The set of complex numbers is isomorphic to $M_x$ and the set $\cF (E, M_x )$ is called the \textit{space of scalar-valued functions on $E$}. This avoids inconsistencies in the next definition. 
	\begin{definition}
	Let $\cA := (A, c_A (X), \lim_A )$ and $\cB := (B, c_B (X) , \lim_B )$ be two summability methods. We say that $\cA$ is \textit{scalar-included} in $\cB$, denoted by $A \subseteq_{\bC} B$ if there exists a unit vector $x \in X$ such that $\cA$ is $\cF (E, M_x)$-included in $\cB$. 
	\end{definition}
	The above definition is simply the definition of $M$-included with $M = \cF (E, M_x)$. In the rest of this paper, since the kernel-summability methods are determined by complex-valued kernels, we will identify $M_x$ with $\bC$. We can now state the result obtained in \cite{MashreghiPariseRansford2021} for sequence-to-function summability methods.
	\begin{theorem}[{\cite[Theorem 5.1]{MashreghiPariseRansford2021}}]
	Let $\cA$ and $\cB$ be two regular sequence-to-function summability methods. Let $X$ and $Y$ be Banach spaces, and let $S : X \ra Y$ and $S_n : X \ra Y$ ($n \geq 0$) be bounded linear operators. Suppose that:
		\begin{enumerate}
		\item $S_n (x) \ra S(x)$ for all $x \in W$, where $W$ is a dense subset of $X$;
		\item $(S_n (X))_{n \geq 0}$ is $\cA$-summable to $S(x)$ for all $x \in X$.
		\item $\cA$ is scalar-included in $\cB$.
		\end{enumerate}
	Then $(S_n (x))_{n \geq 0}$ is $\cB$-summable to $S(x)$ for all $x \in X$.
	\end{theorem}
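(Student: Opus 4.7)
The plan is to exploit the scalar-inclusion hypothesis via composition with functionals in $Y^\ast$, and to bootstrap the resulting weak information to strong convergence through several applications of the Banach-Steinhauss Theorem. Fix $x \in X$ and $r \in F$, and let $B_r^N(x) := \sum_{n=0}^N b_n(r) S_n(x)$, which is a bounded linear operator from $X$ to $Y$. The goal is to show in two stages that (i) the partial sums $(B_r^N(x))_N$ converge strongly to some $B_r(x) \in Y$ for every $x \in X$, and (ii) $B_r(x) \ra S(x)$ as $r \ra \infty$ for every $x \in X$.

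For stage (i), fix $\phi \in Y^\ast$. Applying $\phi$ to the strongly convergent series $\sum_n a_n(r) S_n(x)$ shows that the scalar sequence $(\phi(S_n(x)))_{n \geq 0}$ is $\cA$-summable to $\phi(S(x))$. Scalar-inclusion then yields $\cB$-summability of $(\phi(S_n(x)))_{n \geq 0}$, so $\phi(B_r^N(x)) = \sum_{n=0}^N b_n(r) \phi(S_n(x))$ is Cauchy in $N$ for every $\phi$. Thus $(B_r^N(x))_N$ is weakly Cauchy, hence weakly bounded, hence norm bounded, and one application of the Banach-Steinhauss Theorem to the operators $B_r^N : X \ra Y$ yields $\sup_N \|B_r^N\|_{X \ra Y} < \infty$. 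For $x \in W$, regularity of $\cB$ applied to the convergent sequence $(S_n(x))_{n \geq 0}$ gives strong convergence $B_r^N(x) \ra \sum_n b_n(r) S_n(x)$ as $N \ra \infty$. A $3\varepsilon$ argument using density of $W$ and the operator-norm bound extends this strong convergence to every $x \in X$, producing the bounded linear operator $B_r(x) := \lim_N B_r^N(x)$.

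For stage (ii), applying $\phi \in Y^\ast$ and interchanging the strong limit in $N$ with $\phi$ gives $\phi(B_r(x)) = \sum_n b_n(r) \phi(S_n(x))$, which tends to $\phi(S(x))$ as $r \ra \infty$ by $\cB$-summability of $(\phi(S_n(x)))_{n \geq 0}$. Hence $B_r(x) \ra S(x)$ weakly for every $x \in X$, which implies pointwise norm boundedness $\sup_r \|B_r(x)\|_Y < \infty$, and a further Banach-Steinhauss application yields $\sup_r \|B_r\|_{X \ra Y} < \infty$. Regularity of $\cB$ again gives strong convergence $B_r(x) \ra S(x)$ for each $x \in W$, and a final $3\varepsilon$ argument, using the uniform operator bound together with continuity of $S$, extends this to every $x \in X$.

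The principal obstacle is that scalar-inclusion only transfers summability after pairing with functionals in $Y^\ast$, and therefore yields only weak-type information about the vector-valued $\cB$-transform of $(S_n(x))_{n \geq 0}$. Overcoming this requires repeated invocations of the Banach-Steinhauss Theorem to upgrade weak or pointwise boundedness to uniform operator-norm boundedness, after which density of $W$ combined with $3\varepsilon$ estimates closes the gap between the partial-sum parameter $N$ and the summability parameter $r$.
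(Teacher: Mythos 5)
Your argument follows essentially the same route as the paper's: the paper proves this statement as the special case $E = \bN$ of its Theorem \ref{Thm:inclusionResult}, and there too the proof splits into the two stages you describe (existence of the $\cB$-transform, then its convergence to $S(x)$), uses scalar-inclusion applied to the complex sequences $(\phi(S_n(x)))_{n \geq 0}$, $\phi \in Y^\ast$, to extract weak information, upgrades pointwise bounds to operator-norm bounds by repeated Banach--Steinhaus, and closes both stages with a density-of-$W$ plus $3\varepsilon$ argument. Your partial-sum operators $B_r^N$ are exactly the paper's truncated integrals $B_{r,n}$ in the discrete setting.

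The one step that is not justified as written is the claim in stage (ii) that weak convergence $B_r(x) \ra S(x)$ at infinity implies $\sup_{r \in F} \Vert B_r(x) \Vert_Y < \infty$. Convergence at infinity only controls $\phi(B_r(x))$ for $r$ outside a compact set that depends on $\phi$, and nothing in the hypotheses guarantees that $r \mapsto \phi(B_r(x))$ is bounded on compact subsets of $F$; so the uniform boundedness principle cannot be applied to the full family $\{ B_r \}_{r \in F}$. The paper circumvents this by fixing an arbitrary sequence $r_n \ra \infty$ (using metrizability of the one-point compactification of $F$): along such a sequence $(\phi(B_{r_n}(x)))_{n \geq 0}$ converges, hence is bounded, for every $\phi$, so two applications of Banach--Steinhaus give $\sup_n \Vert B_{r_n} \Vert_{X \ra Y} < \infty$, the $3\varepsilon$ argument yields $B_{r_n}(x) \ra S(x)$, and since the sequence was arbitrary the limit at infinity follows. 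With this adjustment (which is harmless when $F = [0,1)$ or $\bN$, the cases of interest) your proof is complete and coincides with the paper's.
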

	\noindent We will recall an important step in the proof of this result to clarify why we are using the weak integral over the Bochner integral in the definition of a kernel-summability method. Denote by $b_n (r)$ the function defining the summability method $\cB$ in the statement of the theorem. The proof is split in two parts: (a) to prove that the series $\sum_{n \geq 0} b_n (r) S_n (x)$ exists and (b) to prove that, as $r \ra \infty$, $\sum_{n \geq 0} b_n (r) S_n (x)$ converges to $S(x)$. The reason to use the weak integral over the Bochner integral comes from the proof of part (a). In Part (a), to prove that the series is convergent, it is shown that the sequence of partial sums $( \sum_{n = 0}^N b_n (r) S_n (x) )_{N \geq 0}$ is Cauchy in $Y$. The limit $y$ of this Cauchy sequence satisfies the properties of the weak integral of the function $n \mapsto b_n (r) S_n (x)$ on $E$, that is:
		\begin{align*}
		\phi (y) = \sum_{n \geq 0} b_n (r) \phi (S_n (x)) .
		\end{align*}
	Unfortunately, the proof cannot be improved to show that the series $\sum_{n \geq 0} b_n (r) S_n (x)$ is in $\ell^1 (\bN, Y) := L^1 (\bN , Y )$, which would be the Bochner integral of the function $n \mapsto b_n (r) S_n (x)$. This is why the weak integral is used in the definition of the domain of a kernel-summability method.
	
	We now generalize the last result to kernel-summability methods. Recall that $E$ is $\sigma$-compact. We will also assume that $F$ is metrizable and its Alexandrov compactification \cite{Mandelkern1989} denoted by $F_\infty$ is metrizable. An example of an $F$ with these properties is an interval $[0, R)$ for some $R \in (0, \infty ]$. 
	\begin{theorem}\label{Thm:inclusionResult}
	Let $X$ and $Y$ be two Banach spaces. Let $\cA = (A, c_A (Y) , \lim_A )$ and $\cB = (B, c_B (Y) , \lim_B )$ be two kernel-summability methods. Let $S_t: X \ra Y$, $S : X \ra Y$ be bounded linear operators for each $t \in E$. Assume that
		\begin{enumerate}
		\item $t \mapsto S_t (x)$ is continuous for any $x \in X$;
		\item $\lim_{t \ra \infty} S_t (w) = S(w)$ for any $w$ in a dense subset $W$ of $X$;
		\item $t \mapsto S_t(x)$ is $\cA$-summable to $S(x)$ for any $x \in X$;
		\item $\cB$ is regular;
		\item $\cA$ is scalar-included in $\cB$.
		\end{enumerate}
	Then, for any $x \in X$, the function $t \mapsto S_t (x)$ is $\cB$-summable to $S(x)$. 
	\end{theorem}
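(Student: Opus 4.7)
The plan is to realize the candidate weak integral $\int_E b(r,t)\,S_t(x)\, d\mu(t)$ as the strong limit in $Y$ of its Bochner truncations on compact sets, and then to pass to the limit as $r \to \infty$ via a density argument anchored on $W$. The starting point is the scalar information supplied by hypotheses~(3) and~(5): applying an arbitrary $\phi \in Y^\ast$ to the weak integral convergence $\int_E a(r,t) S_t(x)\, d\mu(t) \to S(x)$ shows that the scalar function $t \mapsto \phi(S_t(x))$ is $\cA$-summable to $\phi(S(x))$, so by scalar-inclusion it is also $\cB$-summable to $\phi(S(x))$. Consequently the scalar integral $\int_E b(r,t)\,\phi(S_t(x))\, d\mu(t)$ exists for every $r \in F$ and tends to $\phi(S(x))$ as $r \to \infty$.

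Fix $r \in F$ and set $B_r^n(x) := \int_{K_n} b(r,t)\,S_t(x)\, d\mu(t)$. Each $B_r^n \colon X \to Y$ is a well-defined bounded operator: the integrand is strongly measurable by Proposition~\ref{Prop:ContinuousImpliesStronglyMeas} applied to the continuous $t \mapsto S_t(x)$; Bochner integrability follows from $b(r,\cdot) \in L^1_{\bC}(E)$ (regularity of $\cB$ combined with Theorem~\ref{Thm:SilvermanToeplitzKernel}) and from $\sup_{t \in K_n}\|S_t\|_{X \to Y} < \infty$ (Banach--Steinhauss on the continuous family $\{S_t : t \in K_n\}$). The scalar step together with the Dominated Convergence Theorem gives $\phi(B_r^n(x)) \to \int_E b(r,t)\phi(S_t(x))\, d\mu(t)$, so $(B_r^n(x))_n$ is weakly Cauchy in $Y$ and hence norm-bounded for each fixed $x$; a first Banach--Steinhauss argument produces a constant $C_r$ with $\|B_r^n\|_{X \to Y} \leq C_r$ for every $n$. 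For $w \in W$, conditions~(1) and~(2) put $S_\cdot(w) \in c(E,Y)$, so by regularity of $\cB$ the Bochner integral $B_r(w) := \int_E b(r,t) S_t(w)\, d\mu(t)$ exists and $B_r^n(w) \to B_r(w)$ strongly. For arbitrary $x \in X$, picking $w_m \in W$ with $w_m \to x$ and using
\[
\|B_r^n(x) - B_r^{n'}(x)\| \leq 2\,C_r\,\|x - w_m\| + \|B_r^n(w_m) - B_r^{n'}(w_m)\|
\]
shows that $(B_r^n(x))_n$ is Cauchy in $Y$. Its limit $B_r(x)$ satisfies $\phi(B_r(x)) = \int_E b(r,t)\phi(S_t(x))\, d\mu(t)$ for every $\phi$, so $B_r(x)$ is the Pettis integral of $b(r,\cdot)\,S_\cdot(x)$ and $S_\cdot(x) \in D_B$.

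For the norm convergence $B_r(x) \to S(x)$, the first paragraph already yields the weak version $\phi(B_r(x)) \to \phi(S(x))$, so $\sup_r \|B_r(x)\|_Y < \infty$ for each $x$; since each $B_r \colon X \to Y$ is bounded with $\|B_r\| \leq C_r$, a second Banach--Steinhauss application produces a uniform $C := \sup_r \|B_r\|_{X \to Y} < \infty$. Given $\varepsilon > 0$, choose $w \in W$ with $\|x - w\| < \varepsilon$ and decompose
\[
B_r(x) - S(x) = B_r(x - w) + \bigl(B_r(w) - S(w)\bigr) + S(w - x);
\]
the outer pieces are bounded by $(C + \|S\|)\varepsilon$, while the middle piece tends to $0$ as $r \to \infty$ by regularity of $\cB$ applied to $w \in W$. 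Taking $\limsup$ and then letting $\varepsilon \to 0$ gives $B_r(x) \to S(x)$ in norm.

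The hard part is the Cauchyness of $(B_r^n(x))_n$ for $x \notin W$: a naive tail estimate $\|B_r^{n'}(x) - B_r^n(x)\| \leq \int_{K_{n'} \setminus K_n}|b(r,t)|\,\|S_t(x)\|\, d\mu(t)$ would demand a global bound on $\|S_t\|_{X \to Y}$, which is absent from the hypotheses. The proposed workaround is indirect: obtain the operator bound on $(B_r^n)_n$ from \emph{weak} Cauchyness via Banach--Steinhauss on the $(B_r^n)_n$ family, and then use density of $W$ to transfer the Cauchy property from $W$ to all of $X$.
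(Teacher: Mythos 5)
Your proposal follows essentially the same route as the paper's own proof: the same truncated operators $B_r^n(x)=\int_{K_n}b(r,t)S_t(x)\,d\mu(t)$ over the exhausting compacts, the same chain of Banach--Steinhaus applications (weak convergence of $(B_r^n(x))_n$ gives pointwise norm bounds, then a uniform operator bound), and the same two-fold use of the density of $W$ (once to make $(B_r^n(x))_n$ Cauchy in $n$, once to pass to the limit in $r$). The one step that needs repair is in your final paragraph on convergence in $r$: from $\phi(B_r(x))\to\phi(S(x))$ as $r\to\infty$ you cannot conclude $\sup_{r\in F}\Vert B_r(x)\Vert_Y<\infty$, because convergence at infinity controls $r\mapsto\phi(B_r(x))$ only outside some compact set depending on $\phi$ and $x$; a function on $F$ may converge at infinity and still be unbounded on a compact subset, so Banach--Steinhaus over the full family $\{B_r : r\in F\}$ is not justified. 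The paper circumvents this by using the standing assumption that the Alexandrov compactification $F_\infty$ is metrizable: fix an arbitrary sequence $r_n\to\infty$, observe that each convergent scalar sequence $(\phi(B_{r_n}(x)))_n$ is bounded, run the two Banach--Steinhaus steps along that sequence to get $\sup_n\Vert B_{r_n}\Vert_{X\ra Y}<\infty$, deduce $B_{r_n}(x)\to S(x)$ by your density decomposition, and finally use metrizability to convert sequential convergence into convergence at infinity. With that adjustment (which costs nothing beyond the hypotheses already in force) your argument is complete and coincides with the paper's.
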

	\begin{proof}
	Let $(K_n)_{n \geq 0}$ be the sequence of compact sets such that $K_n \subsetneq K_{n + 1}$ and $E = \cup_n K_n$. Let $a : F \times E \ra \bC$ and $b : F \times E \ra \bC$ be the two $F\times E$-measurable summability kernels associated to the methods $\cA$ and $\cB$ respectively.
	
	To show that $t \mapsto S_t (x)$ is $\cB$-summable for any $x \in X$, we have to show two things:
		\begin{enumerate}
		\item[(a)] the function $t \mapsto S_t (x) \in D_B$, where $D_B$ is the domain of $\cB$.
		\item[(b)] the function $t \mapsto S_t (x) \in c_B (Y)$ and
			\begin{align*}
			\lim_B S_t (x) = \lim_A S_t (x) .
			\end{align*}
		\end{enumerate}
	
	We first show part (a). We therefore have to show that the weak integral of $t \mapsto b(r, t) S_t (x)$ exists for every $r \in F$ and every $x \in X$. Let $r \in F$ and $x \in X$ be fixed. Since the map $t \mapsto S_t (x)$ is $\cA$-convergent by condition (3), it implies that the weak integral
		\begin{align*}
		A_r (x) := \int_E a(r, t) S_t (x) \, d\mu (t)
		\end{align*}
	exists. In particular, the map $t \mapsto a(r, t) \phi (S_t (x))$ is integrable for every $\phi \in Y^\ast$ and
		\begin{align*}
		\phi (A_r (x)) = \int_E a(r, t) \phi (S_t (x)) \, d\mu (t) .
		\end{align*}
	Since $\cA$ is scalar-included in $\cB$, this means, in particular, that
		\begin{align*}
		\int_E b(r, t) \phi (S_t (x)) \, d\mu (t)
		\end{align*}
	exists. From Proposition \ref{Prop:ContinuousImpliesStronglyMeas}, the function $t \mapsto S_t (x)$ is strongly measurable. Also, since $\cB$ is regular, from Theorem \ref{Thm:SilvermanToeplitzKernel}, the map $t \mapsto b(r, t)$ belongs to $L^1_{\bC} (E)$. Therefore, the map $t \mapsto b(r, t) S_t (x)$ is strongly measurable. Also, by continuity, the map $t \mapsto S_t (x)$ is bounded on each $K_n$ and therefore the restriction of the map $t \mapsto a (r, t) S_t (x)$ to the compact set $K_n$ is Bochner integrable by Property \eqref{P:BochnerEquivalentNormIntegrable} in \S\ref{SecSub:BochnerIntegral}, that is the Bochner integral
		\begin{align*}
		\int_{K_n} b(r, t) S_t (x) \, d\mu (t)
		\end{align*}
	exists. By the Lebesgue Dominated Convergence Theorem (for $L^1_\bC (E)$)
		\begin{align*}
		\int_E b(r, t) \phi (S_t (x)) \, d\mu (t) = \lim_{n \ra \infty} \int_{K_n} b(r, t) \phi (S_t (x)) \, d\mu (t) = \lim_{n \ra \infty} \phi \Big( \int_{K_n} b(r, t) S_t (x) \, d\mu (t) \Big) .
		\end{align*}
	and, in particular,
		\begin{align*}
		\sup_{n \geq 0} \Big| \phi \Big( \int_{K_n} b(t, r) S_t (x) \, d\mu (t) \Big) \Big| < \infty .
		\end{align*}
	This is true for any $\phi \in Y^\ast$. Therefore, by the Banach-Steinhauss Theorem and the Hahn-Banach Theorem,
		\begin{align}
		\sup_{n \geq 0} \Big\Vert \int_{K_n} b(r, t) S_t (x) \, d\mu (t) \Big\Vert_Y < \infty . \label{Eq:PointwiseBoundedIntegrals}
		\end{align}
	For any integer $n \geq 0$, define the linear operator $B_{r,n} : X \ra Y$ by
		\begin{align*}
		B_{r, n} (x) := \int_{K_n} b(r, t) S_t (x) \, d\mu (t) .
		\end{align*}
	Then $B_{r, n}$ is well-defined and is continuous. Indeed, by the assumption (1) and the fact that $K_n$ is compact, for any $x \in X$,
		\begin{align*}
		\sup_{t \in K_n} \Vert S_t (x) \Vert_Y < \infty .
		\end{align*}
	Therefore, the Banach-Steinhauss Theorem gives
		\begin{align*}
		M_n := \sup_{t \in K_n} \Vert S_t \Vert_{X \ra Y} < \infty .
		\end{align*}
	So, if $x \in X$, then
		\begin{align*}
		\Vert B_{r, n} (x) \Vert_Y \leq \int_{K_n} |b(r, t)| \Vert S_t (x) \Vert_Y \, d\mu (t) \leq M_n \Big( \int_E |b (r, t) | \, d\mu (t) \Big) \Vert x \Vert_X .
		\end{align*}
	Now, using \eqref{Eq:PointwiseBoundedIntegrals} and a third time the Banach-Steinhauss Theorem, we obtain
		\begin{align}
		M := \sup_{n \geq 0} \Vert B_{n, r} \Vert_{X \ra Y} < \infty . \label{Eq:UniformBoundIntegralOperators}
		\end{align}
		
	Let $\varepsilon > 0$. Use the density of $W$ in assumption (2) to choose a $w \in W$ such that $\Vert x - w \Vert_X < \varepsilon$ and such that $S_t (w) \ra S(w)$ as $t \ra \infty$. In particular, the map $t \mapsto S_t (w)$ is bounded on $E$. This last fact combined with the fact that the map $t \mapsto b(r, t)$ is integrable implies that the function $t \mapsto b(r, t) S_t (w)$ is Bochner integrable on $E$. Therefore, the sequence $(B_{r, n} (w))_{n \geq 0}$ is a Cauchy sequence in $Y$. This means that there is an $N$ such that whenever $m , n \geq N$,
		\begin{align*}
		\Vert B_{r, m} (w) - B_{r, n} (w) \Vert_Y < \varepsilon .
		\end{align*}
	Therefore, we have
		\begin{align*}
		\Vert B_{r, m} (x) - B_{r, n} (x) \Vert_Y & \leq 2 M \Vert x - w \Vert_X + \Vert B_{r, m} (w) - B_{r, n} (w) \Vert_Y \\
		& \leq (2 M + 1 ) \varepsilon .
		\end{align*}
	From this, we conclude that $(B_{r, n}(x))_{n \geq 0}$ is a Cauchy sequence and since $Y$ is complete, $\lim_{n \ra \infty} B_{r, n} (x)$ exists, say $I^B (x)$. The vector $I^B (x)$ is the weak integral of the map $t \mapsto b(r, t) S_t (x)$ because for any $\phi \in Y^\ast$, we have
		\begin{align*}
		\phi (I^B (x)) = \lim_{n \ra \infty} \int_{K_n} \phi (b(r, t) S_t (x)) \, d\mu (t) = \int_E \phi (b (r, t) S_t (x) ) \, d\mu (t ). 
		\end{align*}
	This concludes the proof of part (a).
	
	Let's turn our attention to part (b). Fix $x \in X$. We will show that
		\begin{align*}
		\lim_{r \ra \infty} \int_E b(r, t) S_t (x) \, d\mu (t) = \lim_{r \ra \infty} \int_E a(r, t) S_t (x) \, d\mu (t) = S(x) .
		\end{align*}	
		
	For $r \in F$, define the linear operator $B_r : X \ra Y$ by
		\begin{align*}
		B_r (x) = \int_E b(r, t) S_t (x) \, d\mu (t) ,
		\end{align*}		 
	where the integral on the right-hand side is the weak integral $I^B (x)$ from part (a). The linear operator $B_r$ is also continuous because $B_r (x) = \lim_{n \ra \infty} B_{r, n} (x)$, where $B_{r, n}$ are the continuous linear operators defined in part (a). Using the characterization of limits in the metric space $F_\infty$ in terms of sequences, let $r_n \ra \infty$ be an arbitrary sequence converging to $\infty$ as $n \ra \infty$. As a consequence of assumption (3), we have that $\phi (A_r(x)) \ra \phi (S(x))$ as $r \ra \infty$ for any $\phi \in Y^\ast$. Therefore, $\phi (B_{r_n} (x)) \ra \phi (S(x))$ as $n \ra \infty$ and, in particular,
		\begin{align*}
		\sup_{n \geq 0} | \phi (B_{r_n} (x)) | < \infty .
		\end{align*}
	Using the Banach-Steinhauss Theorem twice, we conclude that
		\begin{align*}
		M' := \sup_{n \geq 0} \Vert B_{r_n} \Vert_{X \ra Y} < \infty .
		\end{align*}
	Let $\varepsilon > 0$. Choose $w \in W$ so that $\Vert x - w \Vert_X < \varepsilon$ and $S_t (w) \ra S(w)$. Since $\cB$ is regular, $B_r (w) \ra S(w)$, as $r \ra \infty$. Therefore, choose $N$ such that $\Vert B_{r_n} (w) - S(w) \Vert_Y < \varepsilon$ for any $n \geq N$. We then have, for $n \geq N$, 
		\begin{align*}
		\Vert B_{r_n} (x) - S(x) \Vert_Y & \leq \Vert B_{r_n} (x - w) \Vert_Y + \Vert S(x - w) \Vert_Y + \Vert B_{r_n} (w) - S(w) \Vert_Y \\
		& \leq \Vert B_{r_n} \Vert_{X \ra Y} \Vert x - w \Vert_X + \Vert S \Vert_{X \ra Y} \Vert x - w \Vert_X + \varepsilon \\
		& \leq (M' + \Vert S \Vert_{X \ra Y} + 1) \varepsilon .
		\end{align*}
	We conclude that $\lim_{n \ra \infty} B_{r_n} (x) = S(x)$. Since the sequence $(r_n)_{n \geq 0}$ was arbitrary, we conclude that $\lim_{r \ra \infty} B_r (x) = S(x)$ as required. This ends the proof.	
	\end{proof}
	If we put $E := \bN$ with the discrete topology, then we recover Theorem 5.1 in \cite{MashreghiPariseRansford2021}. This comes from the fact that the first assumption is automatically satisfied when $E = \bN$ and the fourth assumption is included in the prelude of the Theorem. Notice that in the above result, we didn't assume $\cA$ to be regular, in contrast to what was assumed in Theorem 5.1 in \cite{MashreghiPariseRansford2021}. Note also that we assumed implicitly that the methods $\cA$ and $\cB$ have the same parameter space $F$. In fact, the kernel functions defining $\cA$ and $\cB$ can have different parameters space in their first argument.
	
	Furthermore, the last result requires the existence of a dense subspace $W$ and uses a function with values in $B (X, Y)$, the Banach space of bounded linear operators from $X$ into $Y$. We might ask if those assumptions can be dropped in the statement. As a careful inspection of the above proof shows, one will finds out that the hypothesis (2) is crucial. Indeed, in the proof, we need this assumption to show that the weak integral defining $B_r(x)$ exists for each $r \in F$. Otherwise, the best conclusion that we can draw from this hypothesis is that the sequence $(B_{r, n} (x))_{n \geq 0}$ is weakly Cauchy in $Y$. In this situation, unfortunatly, the weak limit might not exist. Examples of Banach spaces $Y$ where weakly Cauchy sequences don't have limit are $L^\infty ([0, 1])$ and $A (\bD )$ (the disk Algebra). For reference, see \cite[Example 4.5]{Conway1990}. However, if more restrictions are added on the Banach space $X$, we can obtain the following result.
	\begin{theorem}\label{Thm:WeakInclusion}
	Let $X$ be a Banach space. Let $\cA$, $\cB$ be two kernel-summability methods. Assume further that
		\begin{enumerate}
		\item $\cA$ is scalar-included in $\cB$;
		\item $\cB$ is regular;
		\item $X$ is reflexive.
		\end{enumerate}
	Then the method $\cA$ is \textbf{weakly-included} in the method $\cB$.
	\end{theorem}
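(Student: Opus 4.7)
The plan is to fix $v \in c_A (X)$ with $x := \lim_A v$ and show that for each $r \in F$ the weak integral $B_r(v) := \int_E b(r, t) v(t) \, d\mu(t)$ exists in $X$ and that $B_r(v) \ra x$ weakly as $r \ra \infty$. The first step is scalarization. For any $\phi \in X^\ast$, the weak-integral identity $\phi (A_r (v)) = A_r (\phi \circ v)$ shows that $\phi \circ v$ is a scalar-valued $\cA$-summable function with $\lim_A (\phi \circ v) = \phi (x)$. Hypothesis (1) then forces $\phi \circ v \in D_B$ and $\lim_B (\phi \circ v) = \phi (x)$; in particular, $t \mapsto b(r, t) \phi (v(t))$ lies in $L^1_\bC (E)$ for every $r \in F$ and every $\phi \in X^\ast$.

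The main obstacle is to upgrade this scalar-by-scalar information into a single $X$-valued weak integral. Fix $r \in F$ and define the linear functional
\[
T_r : X^\ast \ra \bC , \qquad T_r (\phi) := \int_E b(r, t) \phi (v(t)) \, d\mu (t) ,
\]
which factors as $T_r = I \circ \Psi_r$, where $I$ denotes integration against $\mu$ and $\Psi_r : X^\ast \ra L^1_\bC (E)$ sends $\phi$ to $b(r, \cdot) \phi (v(\cdot))$. To prove that $T_r$ is bounded I would apply the closed graph theorem to $\Psi_r$: if $\phi_n \ra \phi$ in $X^\ast$ and $\Psi_r (\phi_n) \ra g$ in $L^1_\bC (E)$, then passing to an a.e.-convergent subsequence and using the pointwise convergence $\phi_n (v(t)) \ra \phi (v(t))$ (which holds for every $t \in E$ since $\phi_n \ra \phi$ in norm) identifies $g$ with $b(r, \cdot) \phi (v(\cdot))$ a.e. So the graph of $\Psi_r$ is closed, $\Psi_r$ is bounded, and hence so is $T_r$.

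By reflexivity of $X$, the bounded functional $T_r \in X^{\ast \ast}$ is represented by a unique vector $B_r (v) \in X$ with $\phi (B_r (v)) = T_r (\phi)$ for every $\phi \in X^\ast$; this is precisely the defining property of the weak integral of $t \mapsto b(r, t) v(t)$ on $E$, so $v \in D_B$. Taking $r \ra \infty$, the scalar $\cB$-summability established in the first step yields, for every $\phi \in X^\ast$, that $\phi (B_r (v)) = T_r (\phi) \ra \phi (x)$, so $B_r (v) \ra x$ weakly in $X$. This is the desired weakly-inclusion of $\cA$ in $\cB$, and it completes the proof.
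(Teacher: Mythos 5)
Your argument is essentially the paper's own proof: scalarize via hypothesis (1), use the closed graph theorem to bound the map $\phi \mapsto b(r,\cdot)\,\phi(v(\cdot))$ from $X^\ast$ into $L^1_\bC(E)$, and then use $(L^1_\bC(E))^\ast \cong L^\infty_\bC(E)$ together with reflexivity of $X$ to realize the resulting functional in $X^{\ast\ast} = X$ as the weak integral; the convergence statement then follows directly from the scalar inclusion. The only (cosmetic) difference is that you phrase the representation via $T_r = I \circ \Psi_r$ rather than via the adjoint $T_r^\ast(\chi_E)$, and you supply the a.e.-subsequence detail for the closed-graph step that the paper leaves implicit.
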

	In the above statement, a summability method $\cA$ is weakly included in another summability method $\cB$ if the following conditions are satisfied:
		\begin{itemize}
		\item[(a)] $D_A \subset D_B$;
		\item[(b)] If $t \mapsto v(t)$ is $\cA$-summable weakly to $x \in X$ as $t \ra \infty$, then $t \mapsto v(t)$ is $\cB$-summable weakly to $x$.
		\end{itemize}
	
	\begin{proof}[Proof of Theorem \ref{Thm:WeakInclusion}]
	Let $(K_n)$, $F_\infty$, $a, b : F \times E \ra \bC$ be as in the preceding proof. To show that $\cA$ is weakly-included in the method $\cB$, we have to show that
		\begin{enumerate}
		\item[(a)] $D_A \subset D_B$;
		\item[(b)] If $\lim_A \phi (v(t)) = \phi (x)$ for every $\phi \in X^\ast$, then $\lim_B \phi (v(t)) = \phi (x)$.
		\end{enumerate}
	
	We first prove part (a). Let $v \in D_A$. Then, this means that 
		\begin{align*}
		A_r (v) = \int_E a(r, t) v(t) \, d\mu (t)
		\end{align*}
	exists for every $r \in F$. From the properties of the integral, if $\phi \in X^\ast$, then
		\begin{align*}
		\phi (A_r (v)) = \int_E a(r, t) \phi (v(t)) \, d\mu (t) .
		\end{align*}
	Since $\cA$ is scalar-included in $\cB$, we find, in particular, that the map $t \mapsto b(r, t) \phi (v(t))$ is integrable for every $r \in F$ and every $\phi \in X^\ast$.
	
	Fix $r \in F$ for the moment. Define the linear operator $T_r : X^\ast \ra L^1_{\bC} (E)$ by
		\begin{align*}
		T_r (\phi ) := b(r, t) \phi (v(t)) \quad (\phi \in X^\ast ) .
		\end{align*}
	By the closed-graph Theorem, the operator $T_r$ is bounded. Its adjoint is then also defined and $T_r^\ast : (L^1_{\bC} (E))^\ast \ra X^{\ast\ast}$. Since $E$ is a $\sigma$-finite measure space, its dual is isomorphic to $L^\infty_{\bC} (E)$ and the isomorphism is giving by
		\begin{align*}
		\psi (u) := \int_E u(t) w(t) \, d\mu (t) \quad (u \in L^1_\bC (E) , \, w \in L^\infty_\bC (E) )
		\end{align*}
	with $\psi$ in the dual of $L^1_\bC (E)$ (see \cite[Theorem 6.16]{RudinRCA1987}). The space $X$ is also reflexive by assumption (3) and therefore $T_r^\ast : L^\infty_\bC (E) \ra X$, under the appropriate identification. 
	The vector $T_r^\ast (\chi_E )$ is then the weak integral of $t \mapsto b(r, t) v(t)$ as one can check using the isomorphisms of $(L^1_\bC (E))^\ast$ with $L^\infty_\bC (E)$ and of $X$ with $X^{\ast\ast}$. 
	
	The part (b) is immediate from the assumption (1).
	\end{proof}
	The above result may be sufficient in certain applications. For example, it can be applied to the convergence/divergence of Taylor series of holomorphic functions in Hilbert spaces as in \cite{MashreghiPariseRansford2021b, MashreghiPariseRansford2021}, instead of the Theorem \ref{Thm:inclusionResult}.
	
	\section{Consequences On Summability In Holomorphic Banach Spaces}\label{Sec:ConsHolBanach}

Recall that a Banach space $X \subset \hol (\bD )$ is a \textit{Banach holomorphic function space on the unit disk} if the inclusion map $\mathfrak{i} : X \ra \hol (\bD )$ is continuous. We will use the abbreviation BHFD to refer to these spaces. Examples of BHFD are the Hardy spaces $H^p$ ($0 < p \leq \infty$), the disk algebra, the Wiener algebra, the weighted Dirichlet spaces, Bloch spaces, the Bergman spaces $A^p$ ($0 < p < \infty$), the de Branges-Rovnyak spaces, and more. 

Suppose that the set of polynomials is dense in $X$, so in particular any polynomial is in $X$. We define maps $S_n : X \ra X$ by
	\begin{align*}
	S_n (f) (z) := \sum_{k = 0}^n a_k z^k \quad (f \in X, \, z \in \bD ). 
	\end{align*}
Each $S_n$ maps a function $f(z) = \sum_{m = 0}^\infty a_m z^m$ in $X$ to the $n$-th partial sums of the power series of $f$. Given a summability method $\cA = (A, c_A (X) , \lim_A)$, the Taylor series of a function $f \in X$ is $\cA$-summable to some $g \in X$ if the sequence $(S_n(f))_{n \geq 0}$ is $\cA$-summable to $g$.

\begin{corollary}\label{Cor:AppliedTalorSums}
Let $X$ be an BHFD and assume the set of polynomials is dense in $X$. Let $\cA = (A, c_A (X) , \lim_A )$ and $\cB = (B, c_B (X) , \lim_B)$ be two sequence-to-function summability methods such that $\cA$ is scalar-included in $\cB$ and $\cB$ is regular. If the Taylor series is $\cA$-summable to $f$ for every $f \in X$, then it is also $\cB$-summable to $f$ for every $f \in X$.
\end{corollary}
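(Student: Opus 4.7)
The plan is to derive the corollary as a direct application of Theorem \ref{Thm:inclusionResult}, interpreting sequence-to-function summability methods as kernel-summability methods on $E = F = \bN$ with the discrete topology and counting measure (so the kernels are $a(r,n) = a_n(r)$ and $b(r,n) = b_n(r)$). We take $Y = X$, let $S := \mathrm{id}_X$, and let $S_n : X \to X$ be the partial-sum operators $S_n(f)(z) = \sum_{k=0}^n a_k z^k$ where $f(z) = \sum_{k \geq 0} a_k z^k$.

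I would then verify the five hypotheses of Theorem \ref{Thm:inclusionResult} in turn. The continuity of $n \mapsto S_n(f)$ is automatic because $E = \bN$ carries the discrete topology. To see that each $S_n$ is a bounded linear operator on $X$, I would exploit the BHFD structure: the continuous inclusion $\mathfrak{i} : X \to \hol(\bD)$ composed with evaluation at a point of $\bD$ and with Cauchy's coefficient formula shows that each Taylor-coefficient functional $f \mapsto a_k$ is a bounded linear functional on $X$; since polynomials lie in $X$ by hypothesis, $S_n$ is a finite linear combination of bounded rank-one operators into $X$. Condition (2) is handled by choosing $W$ to be the dense subspace of polynomials: for $p$ of degree $d$ one has $S_n(p) = p$ as soon as $n \geq d$, hence $S_n(p) \to p = S(p)$. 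Condition (3) is precisely the assumption that the Taylor series of every $f \in X$ is $\cA$-summable to $f$, while conditions (4) and (5) (regularity of $\cB$, scalar inclusion of $\cA$ in $\cB$) are given.

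With all hypotheses verified, Theorem \ref{Thm:inclusionResult} yields that the sequence $(S_n(f))_{n \geq 0}$ is $\cB$-summable to $S(f) = f$ for every $f \in X$, which is exactly the stated conclusion.

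The only nontrivial point is checking that each $S_n$ is bounded on $X$, which I expect to be the main obstacle in a careful write-up; however, this is a standard consequence of the fact that coefficient functionals are continuous on any BHFD, so the argument reduces to a short remark. Everything else is just bookkeeping to cast the hypotheses of the corollary into the form required by Theorem \ref{Thm:inclusionResult}.
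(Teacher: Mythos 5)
Your proposal is correct and follows essentially the same route as the paper: both reduce the corollary to Theorem \ref{Thm:inclusionResult} with $W$ the polynomials, $S=\mathrm{id}_X$, and establish boundedness of each $S_n$ from the continuity of the Taylor-coefficient functionals on a BHFD (the paper does this via the functionals $f\mapsto f^{(k)}(0)$ on $\hol(\bD)$ restricted to $X$). No gaps.
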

\begin{proof}
Let $E = \bN$ and let $F$ be as in \S\ref{Sec:VecValueInt}. To obtain the result, we only need to verify the assumptions of Theorem \ref{Thm:inclusionResult}. 

Let $W$ be the set of polynomials. We know that $W$ is dense in $X$. Define $S(f) = f$, the identity operator on $X$. We first show that $S_n : X \ra X$ is continuous for any $n \geq 0$. Fix $n \geq 0$ and let $f \in X$ with $f(z) = \sum_{m \geq 0} a_m z^m$. Since $a_m = \frac{f^{(m)}(0)}{m!}$, we have
	\begin{align*}
	\Vert S_n (f) \Vert_X \leq \sum_{k = 0}^n \Big| \frac{f^{(n)} (0)}{n!} \Big| \Vert z^k \Vert_X .
	\end{align*}
The maps $D_k : \hol (\bD ) \ra \bC$ defined by $D_k (f) := f^{(k)} (0)$ is continuous and linear. Since the inclusion $\mathfrak{i} : X \ra \hol (\bD )$ is continuous, the linear functionals $\left. D_k \right|_X : X \ra \bC$ is continuous on $X$. Therefore there are constants $C_k$ ($k \geq 0$) such that
	\begin{align*}
	|f^{(k)} (0)| \leq C_k \Vert f \Vert_X .
	\end{align*}
From this last estimate, we obtain the following estimate:
	\begin{align*}
	\Vert S_n (f) \Vert_X \leq \Big( \sum_{k = 0}^n \frac{C_k}{k!} \Vert z^k \Vert_X \Big) \Vert f \Vert_X
	\end{align*}
and $S_n : X \ra X$ is continuous.

Secondly, for each polynomial $p$ with $N := \mathrm{deg}\, p$, we have $S_n (p) = p$, when $n \geq N$. Therefore $\lim_{n \ra \infty} S_n (p) = p$ for any polynomial $p$. Finally, the method $\cA$ is scalar-included in the method $\cB$. Assumptions (1), (2), (4) and (5) in Theorem \ref{Thm:inclusionResult} are then satisfied.

Suppose that the Taylor series of any $f \in X$ is $\cA$-summable to $f$. Then the assumption (4) of Theorem \ref{Thm:inclusionResult} is satisfied and we can conclude that the Taylor series of any $f \in X$ is $\cB$-summable to $f$.
\end{proof}

We also have a similar result for the Abel means (or the radial dilates) modulo a certain additional assumption. For $0 < r < 1$, let $A_r : \hol (\bD ) \ra \hol (\bD )$ be defined as followed:
	\begin{align*}
	A_r (f) := (1-r) \sum_{m \geq 0} S_m (f) r^n \quad (f \in X ).
	\end{align*}
This is a continuous linear map on $X$ if we assume that $\limsup_{k \ra \infty} \Vert z^k \Vert_X^{1/k} \leq 1$. We say $f$ is $\cB$-Abel summable to some $g \in X$ if $A_r(f)$ is $\cB$-summable to $g$.

\begin{corollary}\label{Cor:AbelSummabilityPowerSeries}
Let $X$ be an BHFD and assume the set of polynomials is dense in $X$ with $\limsup_{k \ra \infty} \Vert z^k \Vert_X \leq 1$. Let $\cB = (B, c_B (X) , \lim_B )$ and $\mathcal{D} = (D, c_D (X) , \lim_D )$ be two kernel-summability methods such that $\cB$ is scalar-included in $\cD$ and $\mathcal{D}$ is regular. If any $f \in X$ is $\cB$-Abel summable to $f$, then for any $f \in X$, it is $\cD$-Abel summable to $f$.
\end{corollary}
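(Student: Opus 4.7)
The plan is to reduce the corollary to Theorem \ref{Thm:inclusionResult} applied with $E = F = [0, 1)$, $Y = X$, the bounded linear operators $S_t := A_t : X \ra X$ (the Abel means / radial dilations) for $t \in [0, 1)$, and the limit operator $S := I : X \ra X$. In this matching, the roles of $\cA$ and $\cB$ of Theorem \ref{Thm:inclusionResult} are played by $\cB$ and $\cD$ of the corollary, respectively. Of the five hypotheses of the theorem, conditions (3), (4) and (5) are direct restatements of the assumptions of the corollary, so only conditions (1) and (2) need verification.

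For condition (2), I would take $W$ to be the set of polynomials, which is dense in $X$ by assumption. After rewriting the defining series of $A_t(f)$ by interchanging the order of summation to get the closed form $A_t(f)(z) = f(tz)$, a polynomial $p(z) = \sum_{k=0}^N a_k z^k$ satisfies
\[
\Vert A_t(p) - p \Vert_X \leq \sum_{k=0}^N |a_k| (1 - t^k) \Vert z^k \Vert_X \ra 0
\]
as $t \ra 1^-$, which gives $\lim_{t \ra 1^-} S_t(w) = S(w)$ for every $w \in W$.

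The main technical step is condition (1): continuity of $t \mapsto A_t(f)$ from $[0, 1)$ into $X$ for every $f \in X$. The strategy is to combine three ingredients. First, continuity of the inclusion $\mathfrak{i} : X \ra \hol(\bD)$ together with the Cauchy estimates on $|z| \leq \rho$ yields $|a_k| \leq M_\rho \Vert f \Vert_X \rho^{-k}$ for every $\rho \in (0, 1)$, where $a_k$ are the Taylor coefficients of $f$. Second, the growth hypothesis $\limsup_k \Vert z^k \Vert_X^{1/k} \leq 1$ provides $\Vert z^k \Vert_X \leq (1 + \eta)^k$ for large $k$. Third, for $t, t_0 \in [0, s]$ the mean value theorem gives $|t^k - t_0^k| \leq k s^{k-1} |t - t_0|$. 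Fixing $t_0 \in [0, 1)$ and selecting $s, \rho, \eta$ with $t_0 < s < \rho < 1$ and $s(1 + \eta) < \rho$, the termwise estimate $\Vert A_t(f) - A_{t_0}(f) \Vert_X \leq \sum_k |a_k| |t^k - t_0^k| \Vert z^k \Vert_X$ then reduces to a convergent geometric-type series times $|t - t_0|$, giving local Lipschitz continuity at $t_0$.

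With conditions (1)--(5) in hand, Theorem \ref{Thm:inclusionResult} yields that $t \mapsto A_t(f)$ is $\cD$-summable to $f$ for every $f \in X$, which is exactly the statement that every $f \in X$ is $\cD$-Abel summable to $f$. The main obstacle is the verification of condition (1): this is the only place where the hypothesis $\limsup_k \Vert z^k \Vert_X^{1/k} \leq 1$ is used in an essential way, and the care lies in choosing the auxiliary parameters $s$, $\rho$, $\eta$ so that the resulting series converges.
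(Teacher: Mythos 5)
Your proposal is correct and follows essentially the same route as the paper: reduce to Theorem \ref{Thm:inclusionResult} with $S_t = A_t$ and $S$ the identity, exactly as in the proof of Corollary \ref{Cor:AppliedTalorSums}. The only difference is that you spell out the verification of hypotheses (1) and (2) --- the continuity of $t \mapsto A_t(f)$ via the Cauchy estimates and the growth hypothesis $\limsup_k \Vert z^k \Vert_X^{1/k} \leq 1$, and the convergence $A_t(p) \to p$ for polynomials --- details the paper leaves implicit when it says the argument ``follows the same scheme as the preceding proof.''
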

\begin{proof}
Since $\limsup_{k \ra \infty} \Vert z^k \Vert_X \leq 1$, then each map $A_r : X \ra X$ is a bounded linear operator. The proof then follows the same scheme as the preceding proof.
\end{proof}

Notice that $A_r (f)$ can be replaced by any other summability methods. For example, we can use the logarithmic summability methods and define
	\begin{align*}
	L_r (f) (z) := -\frac{1}{\log (1 - r)} \int_0^r \frac{A_r (f) (z)}{1 - t} \, dt .
	\end{align*}
In Corollary \ref{Cor:AbelSummabilityPowerSeries}, the $\cB$-Abel summability is replaced by the $\cB$-logarithmic summability of the power series of $f$. 
	
\section{Aknowledgement}
The author would like to thank one of the anonynomous referee for his careful reading of the manuscript. His comments greatly improved the presentation of the paper.

The last section (\S\ref{Sec:ConsHolBanach}) is a generalization of a collection of results that appeared in the author's Ph.D. thesis (in French) that wasn't published anywhere. Therefore, I would like to thank Thomas Ransford for pointing out  a mistake in the proof of Corollary \ref{Cor:AppliedTalorSums} while the author was in the process of writing his PhD thesis.

\bibliographystyle{plain}
\bibliography{biblio-All-these.bib}

\begin{thebibliography}{10}

\bibitem{Boos2000}
J.~Boos.
\newblock {\em Classical and {M}odern {M}ethods in {S}ummability}.
\newblock Oxford Mathematical Monographs. Oxford University Press, Oxford,
  2000.
\newblock Assisted by Peter Cass, Oxford Science Publications.

\bibitem{Borwein1957a}
D.~Borwein.
\newblock On a scale of {A}bel-type summability methods.
\newblock {\em Proc. Cambridge Philos. Soc.}, 53:318--322, 1957.

\bibitem{Conway1990}
J.~B. Conway.
\newblock {\em A {C}ourse in {F}unctional {A}nalysis}, volume~96 of {\em
  Graduate Texts in Mathematics}.
\newblock Springer-Verlag, New York, second edition, 1990.

\bibitem{hardy1949}
G.~H. Hardy.
\newblock {\em Divergent {S}eries}.
\newblock Oxford, at the Clarendon Press, 1949.

\bibitem{Kangro1976}
G.~F. Kangro.
\newblock Theory of summability of sequences and series.
\newblock {\em J. Math. Sci.}, 5:1--45, 1976.

\bibitem{Kuttner1962}
B.~Kuttner.
\newblock {On Discontinuous Riesz Means of Type n}.
\newblock {\em Journal of the London Mathematical Society}, s1-37(1):354--364,
  1962.

\bibitem{leonetti2020}
Paolo Leonetti.
\newblock A characterization of cesàro convergence.
\newblock {\em The American Mathematical Monthly}, 128(6):559--562, 2021.

\bibitem{Leonetti2022}
Paolo Leonetti.
\newblock Regular matrices of unbounded linear operators, 2022.

\bibitem{Lewis2022}
Andrew~D. Lewis.
\newblock {Integrable and absolutely continuous vector-valued functions}.
\newblock {\em Rocky Mountain Journal of Mathematics}, 52(3):925 -- 947, 2022.

\bibitem{Leon2020}
F.~León-Saavedra, M.~del~P. Romero de~la Rosa, and A.~Sala.
\newblock Schur lemma and uniform convergence of series through convergence
  methods.
\newblock {\em Mathematics}, 8(10), 2020.

\bibitem{Maddox1980}
I.~J. Maddox.
\newblock {\em Infinite {M}atrices of {O}perators}, volume 786 of {\em Lecture
  Notes in Mathematics}.
\newblock Springer, Berlin, 1980.

\bibitem{Mandelkern1989}
Mark Mandelkern.
\newblock Metrization of the one-point compactification.
\newblock {\em Proc. Amer. Math. Soc.}, 107(4):1111--1115, 1989.

\bibitem{MashreghiPariseRansford2021b}
Javad Mashreghi, Pierre-Olivier Paris\'{e}, and Thomas Ransford.
\newblock Ces\`aro summability of {T}aylor series in weighted {D}irichlet
  spaces.
\newblock {\em Complex Anal. Oper. Theory}, 15(1):Paper No. 7, 8, 2021.

\bibitem{MashreghiPariseRansford2021}
Javad Mashreghi, Pierre-Olivier Paris\'{e}, and Thomas Ransford.
\newblock Power-series summability methods in de {B}ranges--{R}ovnyak spaces.
\newblock {\em Integral Equations Operator Theory}, 94(2):Paper No. 20, 17,
  2022.

\bibitem{Riesz1924}
M.~Riesz.
\newblock Sur l'equivalence de cértaines methodes de sommation.
\newblock {\em Proceedings of the London Mathematical Society},
  s2-22(1):412--419, 1924.

\bibitem{Robinson1950}
A.~Robinson.
\newblock On functional transformations and summability.
\newblock {\em Proc. London Math. Soc. (2)}, 52:132--160, 1950.

\bibitem{RudinRCA1987}
W.~Rudin.
\newblock {\em Real and Complex Analysis, 3rd Ed.}
\newblock McGraw-Hill, Inc., 1987.

\end{thebibliography}

\end{document}